\providecommand{\U}[1]{\protect\rule{.1in}{.1in}}
\newtheorem{theorem}{Theorem}
\newtheorem{corollary}[theorem]{Corollary}
\newtheorem{definition}[theorem]{Definition}
\newtheorem{example}[theorem]{Example}
\newtheorem{lemma}[theorem]{Lemma}
\newtheorem{proposition}[theorem]{Proposition}
\newtheorem{remark}[theorem]{Remark}
\newcommand{\gM}{\mathcal{M}}
\newcommand{\gE}{\mathcal{E}}
\newcommand{\gL}{\mathcal{L}}
\newcommand{\oq}{\textquotedblleft}
\newcommand{\cq}{\textquotedblright}
\newcommand{\mr}{\mathrm}
\newcommand{\N}{\mathbb{N}}
\newcommand{\modu}{\, \mathrm{mod} \,}
\thanks{}
\email{lvitagliano@unisa.it}
\begin{document}
\title[Vector Valued Forms on $\N Q$-manifolds]{Vector Bundle Valued Differential Forms on $\N Q$-manifolds}
\author{Luca Vitagliano}
\address{DipMat, Universit\`a degli Studi di Salerno {\& Istituto Nazionale di Fisica
Nucleare, GC Salerno,} Via Giovanni Paolo II n${}^\circ$ 123, 84084 Fisciano (SA), Italy.}

\begin{abstract}
Geometric structures on $\N Q$-manifolds, i.e.~non-negatively graded manifolds with an homological vector field, encode non-graded geometric data on Lie algebroids and their higher analogues. A particularly relevant class of structures consists of vector bundle valued differential forms. Symplectic forms, contact structures and, more generally, distributions are in this class. We describe vector bundle valued differential forms on non-negatively graded manifolds in terms of non-graded geometric data. Moreover, we use this description to present, in a unified way, novel proofs of known results, and new results about degree one $\N Q$-manifolds equipped with certain geometric structures, namely symplectic structures, contact structures, involutive distributions (already present in literature) and locally conformal symplectic structures, and generic vector bundle valued higher order forms, in particular presymplectic and multisymplectic structures (not yet present in literature).
\end{abstract}
\maketitle

\emph{Keywords}: graded manifolds, $\N Q$-manifolds, vector bundle valued differential forms, Lie algebroids, Spencer operators.

\ 

\emph{MSC 2010}: 58A50, 53D17.

\tableofcontents

\section{Introduction}
Graded geometry encodes (non-graded) geometric structures in an efficient way. For instance, a vector bundle is the same as a degree one non-negatively graded manifold. In this respect $\N Q$-manifolds, i.e.~non-negatively graded manifolds equipped with an homological vector field, are of a special interest. Namely, they encode Lie algebroids in degree one, and higher versions of Lie algebroids (including homotopy Lie algebroids) in higher degrees \cite{v10} (see also \cite{b11, k01, sss09, sz11, bp12, vit14b}, and \cite{k01b, vit14} for applications of homotopy Lie algebroids). Accordingly, geometric data on Lie algebroids, or higher versions of them, that are compatible with the algebroid structure, can be encoded by suitable geometric structures on an $\N Q$-manifold that are preserved by the homological vector field. This is a general rule with various examples scattered in the literature. For instance, degree one \emph{symplectic $\N Q$-manifolds} are equivalent to Poisson manifolds (which can be understood as Lie algebroids of a special kind)  \cite{r02}. Similarly, degree one \emph{contact $\N Q$-manifolds} \cite{g13, m13} are equivalent to Jacobi manifolds, and degree one $\N Q$-manifolds equipped with a compatible involutive distribution are equivalent to Lie algebroids equipped with an \emph{IM foliation} (see \cite{jo11} for a definition) \cite{zz12}. More examples can be presented in degree two. For instance, degree two symplectic $\N Q$-manifolds are equivalent to Courant algebroids \cite{r02}, and degree two contact $\N Q$-manifolds encode a contact version of Courant algebroids: \emph{Grabowski's contact-Courant algebroids} \cite{g13}.

In all examples above the geometric structure on the $\N Q$-manifold is, or can be understood as, a \emph{differential form with values in a vector bundle}. This motivates the study of vector bundle valued differential forms (vector valued forms, in the following) on graded manifolds, and, in particular, $\N Q$-manifolds. In this paper, we describe vector valued forms on non-negatively graded manifolds in terms of non-graded geometric data (Theorem \ref{Theorem1}). Later we apply this description to the study of degree one $\N Q$-manifolds equipped with a compatible vector valued form. In this way, we get a unified formalism for the description of degree one contact $\N Q$-manifolds, symplectic $\N Q$-manifolds, and $\N Q$-manifolds equipped with a compatible involutive distribution. In particular, we manage to present alternative proofs of Roytenberg \cite{r02}, Grabowski \cite{g13}, Mehta \cite{m13}, and Zambon-Zhu \cite{zz12} results (in degree one). We also discuss three new examples. Namely, we show that degree one presymplectic $\N Q$-manifolds (with an additional non-degeneracy condition) are basically equivalent to Dirac manifolds (Corollary \ref{Cor:presymp}). We also show that degree one locally conformal symplectic $\N Q$-manifolds are equivalent to locally conformal Poisson manifolds (Theorem \ref{TheoremLCS}), and, more generally,
degree one $\N Q$-manifolds equipped with a compatible, higher degree, vector valued form are equivalent to Lie algebroids equipped with \emph{Spencer operators} (Theorem \ref{TheorSO}). The latter have been recently introduced in \cite{css12} (see also \cite{s13}) as infinitesimal counterparts of multiplicative vector valued forms on Lie groupoids. In particular, degree one multisymplectic $\N Q$-manifolds are equivalent to Lie algebroids equipped with an \emph{IM multisymplectic structure} \cite{bci13} (Theorem \ref{TheorMS}). 
We stress that we do only consider differential forms with values in vector bundles generated in one single degree (which, up to a shift, are actually generated in degree zero). This hypothesis simplifies the discussion a lot. We hope to discuss the general case, as well as higher degree cases, elsewhere.

The paper is divided into three main sections and two appendixes. In Section \ref{SecVVF}, after a short review of vector valued Cartan calculus on graded manifolds, we present our description of vector valued forms on $\N$-manifolds in terms of non-graded geometric data (Theorem \ref{Theorem1}). As already remarked, this description allows one to present in a unified way various results scattered in the literature about the correspondence between geometric structures on degree one $\N Q$-manifolds and (non-graded) geometric structures on Lie algebroids. In Section \ref{Sec1}, we discuss $1$-forms on degree one $\N Q$-manifolds. Surjective $1$-forms are the same as distributions and we discuss in some details the contact and involutive cases. The results of this section (Theorem \ref{Gra} and Theorem \ref{TheoremLCS}) are already present in literature, but they are presented here in a new and unified way that allows a straightforward generalization to (possibly degenerate) differential forms of higher order. In Section \ref{Sec2}, we discuss $2$-forms (on degree one $\N Q$-manifolds). In particular, we present a novel proof of Roytenberg's remark that degree one symplectic $\N Q$-manifolds are equivalent to Poisson manifolds \cite{r02} (Theorem \ref{Roy}). We also generalize Roytenberg result in two different directions, namely to presymplectic forms on one side (Theorem \ref{Theor:presymp} and Corollary \ref{Cor:presymp}) and to locally conformal symplectic structures on the other side (Theorem \ref{TheoremLCS}). In Section \ref{SecHigher} we discuss the general case of a differential form of arbitrarily high order. In particular, we relate compatible vector valued forms on $\N Q$-manifold and the Spencer operators of Crainic-Salazar-Struchiner \cite{css12,s13} (Theorem \ref{TheorSO}). Finally, we discuss degree one multisymplectic $\N Q$-manifolds (Theorem \ref{TheorMS}). The paper is complemented by two appendixes. In Appendix \ref{Ap1}, we revisit slightly the concept of locally conformal symplectic manifolds \cite{v85}, and give a slightly more intrinsic definition of them. We also briefly review the relation between locally conformal symplectic manifolds and locally conformal Poisson manifolds \cite{v07}. In Appendix \ref{ApLA}, we review the definition of Lie algebroids and their representations. As already remarked they play a key role in the paper. 

\subsection{Notations and Conventions}
Let $V = \bigoplus_i V_i$ be a graded vector space. We denote by $|v|$ the degree of a homogeneous element, i.e.~$|v| = i$ whenever $v \in V_i$, unless otherwise stated.

Let $\gM$ be a (graded) manifold, and $\gE \rightarrow \gM$ a (graded) vector bundle on it. We denote by $M$ the support of $\gM$. In the case when $\gM$ is non-negatively graded, $M$ is also the degree zero shadow of $\gM$. Moreover, we denote by $C^\infty_i (\gM)$, (resp.~$\mathfrak{X}_i (\gM)$, $\Gamma_i (\gE)$) the vector space of degree $i$ smooth functions on $\gM$ (resp.~vector fields on $\gM$, sections of $\gE$). We also denote by $\mathfrak{X}_- (\gM)$ the graded vector space of negatively graded vector fields on $\gM$. Sometimes, if there is no risk of confusion, we denote by $\gE$ the (graded) $C^\infty (\gM)$-module of sections of $\gE$. Similarly, we often identify (graded) vector bundle morphisms and (graded) homomorphisms between modules of sections. 

We adopt the Einstein summation convention.

\section{Vector Valued Forms on Graded Manifolds}\label{SecVVF}
\subsection{$\N Q$-manifolds and vector $\N Q$-bundles}
 we refer to \cite{r02,m06,cs11} for details about graded manifolds, and, in particular, $\N$-manifolds. In the following, we just recall some basic facts which will be often used below. We will work with the simplest possible notion of a \emph{graded manifold}. Namely, any graded manifold $\gM$ in this paper is equipped with one single $\mathbb{Z}$-grading in its algebra $C^\infty (\gM)$ of smooth functions (unless otherwise stated). Moreover, $C^\infty (\gM)$ is graded commutative with respect to the grading. We will call $\emph{degree}$ the grading. We will focus on $\N$-manifolds, i.e.~\emph{non-negatively graded} manifolds. Recall that the \emph{degree of an $\N$-manifold} is the highest degree of its coordinates. Similarly, the degree of a vector $\N$-bundle, i.e.~a non-negatively graded vector bundle over an $\N$-manifold, is the highest degree of its fiber coordinates.

\begin{example} \label{Ex3}
Every degree one $\N$-manifold $\gM$ is of the form $A[1]$ for some non-graded vector bundle $A \rightarrow M$, and one has $C^\infty (\gM) =\Gamma ( \wedge^\bullet A^\ast)$. In particular, degree zero functions on $\gM$ identify with functions on $M$, and degree one functions on $\gM$ identify with sections of $A^\ast$. Accordingly, degree $-1$ vector fields on $\gM$ identify with sections of $A$. In the following, we will tacitly understand the identifications $C^\infty_0 (\gM) \simeq C^\infty (M)$, $C_1^\infty (\gM) \simeq \Gamma( A^\ast)$, and $\mathfrak{X}_{-1}(\gM) \simeq \Gamma (A)$. The action of a degree $-1$ vector field $X \in \Gamma(A)$ on a degree one function $f \in \Gamma (A^\ast)$ is given by the \emph{duality pairing}: $X(f) = \langle X, f \rangle$.
\end{example}

\begin{example}\label{Ex2}
Recall that every $\N$-manifold $\gM$ is fibered over its degree zero shadow $M$. Every degree zero vector $\N$-bundle $\gE$ over $\gM$ is of the form $\gM \times_M E$ for some non-graded vector bundle $E \rightarrow M$, and one has $\Gamma (\gE) = C^\infty (\gM) \otimes \Gamma(E)$ (where the tensor product is over $C^\infty (M)$). In particular, degree zero sections of $\gE$ identify with sections of $E$. In the following, we will tacitly understand the identification $\Gamma_0 (\gE) \simeq \Gamma (E)$.
\end{example}

A \emph{$Q$-manifold} is a graded manifold $\gM$ equipped with an homological vector field $Q$, i.e.~a degree one vector field $Q$ such that $[Q,Q] = 0$. An $\N Q$-manifold is a non-negatively graded $Q$-manifold.

\begin{example}
Every degree one $\N Q$-manifold $(\gM, Q)$ is of the form $(A[1], d_A)$ for some non-graded Lie algebroid $A \rightarrow M$ (see Appendix \ref{ApLA} for a definition of Lie algebroid). Here $d_A$ is the homological derivation induced in $\Gamma (\wedge^\bullet A^\ast) = C^\infty (\gM)$. The Lie bracket $[\![-,-]\!]$ in $\Gamma (A)$ and the anchor $\rho : \Gamma(A) \rightarrow \mathfrak{X}(M)$ can be recovered from $Q$ via formulas
\begin{align*}
[\![X,Y]\!] & = [[Q,X],Y], \\
\rho (X) (f) & = [Q, X] (f),
\end{align*}
where $X,Y \in \Gamma (A)$ are also interpreted as degree $-1$ vector fields on $\gM$ (so that $[[Q,X],Y]$ is a degree $-1$ vector field as well), and $f \in C^\infty (M)$.
\end{example}

Similarly, we call a \emph{$Q$-vector bundle} (resp. $\N Q$-vector bundle) a graded vector bundle $\gE \rightarrow \gM$ (resp. a vector $\N$-bundle) equipped with an homological derivation. In this respect, recall that a (graded) derivation of $\gE$ is a graded, $\mathbb{R}$-linear map ${\mathbb X} : \Gamma (\gE) \rightarrow \Gamma (\gE)$ such that
\[
{\mathbb X} (fe) = X(f) e + (-)^{f {\mathbb X}} f {\mathbb X} (e), \quad f \in C^\infty (\gM), \quad e \in \Gamma (\gE),
\]
for a (necessarily unique) vector field $X \in \mathfrak{X} (\gM)$ called the \emph{symbol of} ${\mathbb X}$. Clearly, a derivation of $\gE$ is completely determined by its symbol and its action on generators of $\Gamma (\gE)$.
\begin{example}
Denote by $\Delta$ the grading vector field on $\gM$, i.e.~$\Delta (f) = |f| f $, for all homogeneous functions $f$ on $\gM$.
The grading $\Delta_{\gE} : \Gamma (\gE) \rightarrow \Gamma (\gE)$, $e \mapsto |e| e$, is a distinguished degree zero derivation. Obviously, the symbol of $\Delta_{\gE}$ is $\Delta$.
\end{example}

\begin{example} \label{Ex1}
Let $\gM$ be an $\N$-manifold, and let $\gE = \gM \times_M E$ be a degree zero vector $\N$-bundle on it. Since $\Gamma (\gE)$ is generated in degree zero, then a negatively graded derivation $\mathbb X$ of $\gE$ is completely determined by its symbol $X$ and, therefore, it is the same as a negatively graded vector field on $\gM$. Specifically, for a section of $\gE$ of the form $f \otimes e$, $f \in C^\infty (\gM)$, $e \in \Gamma (E)$, one has ${\mathbb X} (f \otimes e) = X(f) \otimes e$. In the following, we will tacitly identify negatively graded derivations of $\gE$ and negatively graded vector fields on $\gM$.
\end{example}

Derivations of $\Gamma (\gE)$ are sections of a (graded) Lie algebroid $D\gE$ over $\gM$ with bracket given by the (graded) commutator, and anchor given by the symbol. An \emph{homological derivation of $\gE$} is a degree one derivation $\mathbb{Q}$, with symbol $Q$, such that $[\mathbb{Q},\mathbb{Q}] = 0$ (in particular, $Q$ is an homological vector field).

\begin{example}
Any degree zero $\N Q$-vector bundle $(\gE, \mathbb{Q})$ over a degree one $\N$-manifold $\gM$ is of the form $(A[1]\times_M E, d_E)$ for some non-graded Lie algebroid $A \rightarrow M$ equipped with a representation $E \rightarrow M$. Here $d_E$ is the homological derivation induced on $\Gamma (\wedge^\bullet A^\ast \otimes E) = \Gamma (\gE)$. The algebroid structure on $A$ corresponds to the symbol $Q$ of $\mathbb{Q}$, while the (flat) $A$-connection $\nabla^E$ in $E$ can be recovered from $\mathbb{Q}$ via formula
\[
\nabla^E_X e  = [\mathbb{Q}, X] (e) = \mathbb{Q} (X(e))
\]
where $X \in \Gamma (A)$ is also interpreted as a degree $-1$ derivation of $\gE$ (see Example \ref{Ex1}), and $e \in \Gamma (E)$.
\end{example}

\subsection{Vector valued Cartan calculus on graded manifolds}\label{SecCartan}
Let $\gM$ be a graded manifold and let $\gE$ be a graded vector bundle over $\gM$. Differential forms on $\gM$ are functions on $T[1]\gM$ which are polynomial on fibers of $T[1] \gM \rightarrow \gM$. In particular, the algebra $\Omega (\gM)$ of differential forms on $\gM$ is equipped with two gradings: the \oq form\cq\ degree and the \oq internal, manifold\cq\ degree, which is usually referred simply as the \emph{degree} (or, sometimes, the \emph{weight}). The \oq total\cq\ degree is the sum of the form degree and the degree. Notice that the algebra $\Omega (\gM)$ is graded commutative with respect to the total degree. Similarly, $\gE$-valued differential forms on $\gM$ are sections of the vector bundle $T[1] \gM \times_{\gM} \gE \rightarrow T[1] \gM$ which are polynomial on fibers of $T[1] \gM \longrightarrow \gM$. The $\Omega (\gM)$-module $\Omega (\gM, \gE) \simeq \Omega (\gM) \otimes \Gamma (\gE)$ of $\gE$-valued forms is equipped with two gradings, the \oq internal\cq\ degree will be referred to simply as the \emph{degree}.  we will denote by $|\omega|$ the degree of a homogeneous (with respect to the internal degree) $\gE$-valued form $\omega$.

Now, we briefly review the $\gE$-valued version of Cartan calculus. Let ${\mathbb X}$ be a derivation of $\gE$. There are unique derivations $i_{\mathbb X} , L_{\mathbb X}$ of the vector bundle $T[1] \gM \times_{\gM} \gE \rightarrow T[1] \gM$ such that
\begin{enumerate}
\item the symbol of $i_{\mathbb X}$ is the insertion $i_{X}$ of the symbol $X$ of ${\mathbb X}$,
\item $i_{\mathbb X}$ vanishes on $\Gamma (\gE)$,
\item the symbol of $L_{\mathbb X}$ is the Lie derivative $L_{X}$ along the symbol $X$ of ${\mathbb X}$,
\item $L_{\mathbb X}$ agrees with ${\mathbb X}$ on $\Gamma (\gE)$.
\end{enumerate}
Notice that, actually,  $i_{\mathbb X}$ does only depend on the symbol of ${\mathbb X}$. For this reason, we will sometimes write $i_{X}$ for $i_{\mathbb X}$.
\begin{example}
For any homogenous $\gE$-valued form $\omega$, $L_{\Delta_{\gE}} \omega = |\omega| \omega$.
\end{example}
The following $\gE$-valued Cartan identities hold
\begin{equation} \label{Eq6}
[i_{\mathbb X}, i_{{\mathbb X}^\prime}] = 0, \quad
[L_{\mathbb X}, i_{{\mathbb X}^\prime}] = i_{[ {\mathbb X}, {\mathbb X}^\prime]}, \quad
[L_{\mathbb X}, L_{{\mathbb X}^\prime}] = L_{[ {\mathbb X}, {\mathbb X}^\prime]}.
\end{equation}
for all ${\mathbb X}, {\mathbb X}^\prime \in \Gamma (D \gE )$, where the bracket $[-,-]$ denotes the graded commutator. Moreover,
\begin{equation} \label{Eq5}
i_{f {\mathbb X}} = f i_{\mathbb X}, \quad L_{f {\mathbb X}} = f L_{\mathbb X} + (-)^{f+ {\mathbb X}} df\, i_{\mathbb X} .
\end{equation}
for all $f \in C^\infty (\gM)$.

Now suppose that $\gE$ is equipped with a flat connection $\nabla$. Recall that a \emph{connection} in $\gE$ is a graded, homogeneous, $C^\infty (\gM)$-linear map $\nabla : \mathfrak{X}(\gM) \rightarrow \Gamma (D \gE )$, denoted $X \mapsto \nabla_X$, such that the symbol of $\nabla_X$ is precisely $X$. In particular $| \nabla | = 0$. Derivation $\nabla_X$ is called the \emph{covariant derivative along $X$}. A connection $\nabla$ is \emph{flat} if it is a morphism of (graded) Lie algebras, i.e.~$[\nabla_X, \nabla_Y] = \nabla_{[X,Y]}$, for all $X,Y \in \mathfrak{X} (\gM)$. A connection $\nabla$ in $\gE$ determines a unique degree one derivation $d_\nabla$ of the vector bundle $T[1] \gM \times_\gM \gE \rightarrow T[1] \gM$ such that
\begin{enumerate}
\item the symbol of $d_\nabla$ is the de Rham differential $d \in \mathfrak{X} (T[1] \gM)$,
\item $i_X d_\nabla e = \nabla_X e$ for all $e \in \Gamma (\gE)$ and $X \in \mathfrak{X}(\gM)$.
\end{enumerate}
Derivation $d_\nabla$ is the \emph{de Rham differential of $\nabla$}. It is an homological derivation iff $\nabla$ is flat.

Let $\nabla$ be a flat connection in $\gE$. The following identities hold
\begin{equation}\label{eq:nabla}
[i_{\nabla_X}, d_\nabla]= L_{\nabla_X}, \quad [L_{\nabla_X} , d_\nabla] = 0, \quad [d_\nabla, d_\nabla] = 0,
\end{equation}
for all $X \in \mathfrak{X} (\gM)$.

\begin{remark}\label{rem:Delta_E}
Specialize to the case when $\gM$ is an $\N$-manifold and $\gE$ is degree zero. Then $\gE = \gM \times_M E$ for some vector bundle $E$ over the degree zero shadow $M$ of $\gM$. A connection $\nabla^0$ in $E$ induces a unique connection $\nabla$ in $\gE$ such that
\[
\nabla_X e = \nabla^0_{\underline{X}} e,
\]
for all $e \in \Gamma (E)$ and $X \in \mathfrak{X}_0 (\gM)$, where $\underline X \in \mathfrak X (M)$ is the projection of $X$ onto $M$. Connection $\nabla$ is flat iff $\nabla^0$ is flat. Moreover, every connection in $\gE$ is of this kind. Notice that, whatever $\nabla$, the covariant derivative along the grading vector field $\Delta$ coincides with the grading derivation $\Delta_\gE$. To see this it is enough to compare the action of $\nabla_\Delta$ and $\Delta_\gE$ on generators. Locally, $\Gamma (\gE)$ is generated by $\nabla^0$-flat sections of $E$. Thus, let $e \in \Gamma (E)$ be $\nabla^0$-flat. Then $\nabla_\Delta e = 0 = \Delta_\gE e$. As an immediate consequence, every $d_\nabla$-closed $E$-valued differential form on $\gM$ of positive degree $n$, $\omega$, is also $d_\nabla$-exact, i.e.~$\omega = d_\nabla \vartheta$, for a suitable $\vartheta$. One can choose, for instance, $\vartheta = n^{-1} i_{\Delta_\gE} \omega$. Indeed,
\[
d_\nabla \left( \frac{1}{n} i_{\Delta_\gE} \omega \right) = \frac{1}{n} [d_\nabla, i_{\Delta_\gE}] \omega = \frac{1}{n} L_{\Delta_\gE} \omega = \omega.
\]
\end{remark}

\subsection{An alternative description of vector valued forms on $\N $-manifolds} In the following, we will only consider the case when $\gM$ is an $\N $-manifold and $\gE$ is generated in one single degree. Let $M$ be the degree zero shadow of $\gM$. Then, up to an irrelevant shift, $\gE$ is isomorphic to a pull-back $\gM \times_M E$, where $E$ is a non-graded vector bundle over $M$ (see Example \ref{Ex2}). Accordingly, we will often write $C^\infty (\gM, E)$ for $\Gamma (\gE)$ and $\Omega (\gM, E)$ for $\Omega (\gM, \gE)$.

\begin{remark}
Despite the huge simplifications inherent to the hypothesis $\gE \simeq \gM \times_M E$, this case still captures many interesting situations. For instance, degree $n$ symplectic \cite{r02} and contact \cite{g13,m13} $\N$-manifolds can be both understood as $\N $-manifolds $\gM$ equipped with a degree $n$ differential form with values in a vector bundle concentrated in just one degree (the trivial bundle $\gM \times \mathbb{R}$ in the symplectic case, and a generically non-trivial line bundle concentrated in degree $n$ in the contact case). We hope to discuss the case of a general vector bundle $\gE$ elsewhere.
\end{remark}

\begin{theorem}\label{Theorem1}
Let $n$ be a positive integer. A degree $n$ differential $k$-form on $\gM$ with values in $E$ is equivalent to the following data:
\begin{itemize}
\item a degree $n$ (first order) differential operator $D : \mathfrak{X}_- (\gM) \rightarrow \Omega^k (\gM, E)$, and
\item a degree $n$ $C^\infty (M)$-linear map $\ell : \mathfrak{X}_- (\gM) \rightarrow \Omega^{k-1} (\gM, E)$,
\end{itemize}
such that
\begin{equation}
D(f X) = f D(X) +(-)^X df\, \ell (X), \label{Eq1}
\end{equation}
and, moreover,
\begin{align}
L_X D(Y) - (-)^{XY} L_Y D(X) & = D([X,Y]),  \label{Eq2}\\
L_X \ell(Y) - (-)^{X(Y-1)} i_Y D(X) & = \ell([X,Y]), \label{Eq3}\\
i_X \ell(Y) - (-)^{(X-1)(Y-1)} i_Y \ell(X) & = 0. \label{Eq4}
\end{align}
for all $X,Y \in \mathfrak{X}_- (\gM)$, and $f \in C^\infty (M)$.
\end{theorem}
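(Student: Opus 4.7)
The plan is to construct explicit inverse maps between degree $n$ $E$-valued $k$-forms on $\gM$ and pairs $(D,\ell)$ satisfying (\ref{Eq1})--(\ref{Eq4}). For the forward direction, given such an $\omega$, set $D(X) := L_X \omega$ and $\ell(X) := i_X \omega$ for $X \in \mathfrak{X}_-(\gM)$, where $X$ is identified with the associated derivation of $\gE = \gM \times_M E$ via Example \ref{Ex1}. The Leibniz rule (\ref{Eq5}), specialized to $f \in C^\infty(M)$ (so $|f|=0$), immediately gives (\ref{Eq1}) together with the $C^\infty(M)$-linearity of $\ell$; the three graded Cartan identities (\ref{Eq6}) evaluated on $\omega$ yield (\ref{Eq2})--(\ref{Eq4}), once one notes that in total degree $|L_X|=|X|$ while $|i_X|=|X|-1$.

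For the inverse direction, I fix local coordinates $(x^a, \xi^\alpha)$ on $\gM$ with $\xi^\alpha$ of positive degree and write the grading vector field as $\Delta = \sum_\alpha |\xi^\alpha|\,\xi^\alpha \partial_\alpha$, with $\partial_\alpha := \partial/\partial\xi^\alpha \in \mathfrak{X}_-(\gM)$. Since $L_\Delta\omega = n\omega$ and (\ref{Eq5}) applied to each summand produces a sign $(-)^{|\xi^\alpha|+|\partial_\alpha|} = +1$, one obtains the reconstruction formula
\begin{equation}\label{EqRecon}
\omega \;=\; \frac{1}{n}\sum_\alpha |\xi^\alpha|\bigl(\xi^\alpha\, D(\partial_\alpha) + d\xi^\alpha \wedge \ell(\partial_\alpha)\bigr).
\end{equation}
This exhibits $\omega$ as locally determined by $(D,\ell)$, giving injectivity of the forward map. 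For surjectivity, starting from $(D,\ell)$ satisfying the axioms, I define $\omega$ in each chart by (\ref{EqRecon}) and verify that $L_X\omega = D(X)$ and $i_X\omega = \ell(X)$ for every $X \in \mathfrak{X}_-(\gM)$; it suffices to check this for $X = \partial_\beta$, and then to extend to arbitrary $X$ by means of (\ref{Eq1}) together with the $C^\infty(M)$-linearity of $\ell$. The verification for $X = \partial_\beta$ reduces, after applying (\ref{Eq5}) to commute $L_{\partial_\beta}$ and $i_{\partial_\beta}$ past the factors $\xi^\alpha$ and $d\xi^\alpha$, to rewriting the cross terms $L_{\partial_\beta} D(\partial_\alpha)$, $i_{\partial_\beta}D(\partial_\alpha)$, $L_{\partial_\beta}\ell(\partial_\alpha)$, and $i_{\partial_\beta}\ell(\partial_\alpha)$ via (\ref{Eq2})--(\ref{Eq4}) and collecting terms. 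Coordinate independence of the resulting $\omega$ then follows from the uniqueness established in the previous step.

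The main obstacle is the sign bookkeeping in the surjectivity step: three distinct gradings (form degree, internal degree, total degree) enter simultaneously, and the compatibility conditions (\ref{Eq2})--(\ref{Eq4}) are precisely the integrability constraints required for the reconstruction to close up consistently. Concretely, (\ref{Eq4}) controls the double contractions arising from $i_X(d\xi^\alpha \wedge \ell(\partial_\alpha))$, (\ref{Eq3}) governs the cross terms produced by $L_X(d\xi^\alpha \wedge \ell(\partial_\alpha))$ and $i_X(\xi^\alpha D(\partial_\alpha))$, and (\ref{Eq2}) handles the $L_X D(\partial_\alpha)$ contribution; without any one of them the right-hand side of (\ref{EqRecon}) would fail to have $(D,\ell)$ as its Lie derivatives and contractions.
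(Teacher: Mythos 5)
Your forward direction and your reconstruction formula coincide with the paper's (the paper writes $\omega := \frac{|z^a|}{n}\left(z^a D(\partial_a) + dz^a\,\ell(\partial_a)\right)$ and obtains uniqueness from $n\omega = L_{\Delta_\gE}\omega$ exactly as you do), so the overall strategy is the same. However, there is a genuine gap in your surjectivity step: you claim that after verifying $L_X\omega = D(X)$ and $i_X\omega = \ell(X)$ for $X = \partial_\beta$, one can ``extend to arbitrary $X$ by means of (\ref{Eq1}) together with the $C^\infty(M)$-linearity of $\ell$.'' This extension is not available in general. The $C^\infty(M)$-module $\mathfrak{X}_-(\gM)$ is \emph{not} generated by the coordinate vector fields $\partial_\alpha$ alone: for an $\N$-manifold of degree $\geq 2$ it is locally generated by vector fields of the form $z^{b_1}\cdots z^{b_k}\partial_b$ with $|z^{b_1}|+\cdots+|z^{b_k}|-|z^b|<0$, whose coefficient functions $z^{b_1}\cdots z^{b_k}$ have positive degree and hence do not lie in $C^\infty(M)$. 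Property (\ref{Eq1}) is stated only for $f \in C^\infty(M)$ and says nothing about multiplication by such positively graded functions, so it cannot carry you from $\partial_\beta$ to these generators. (Your argument is complete only in the degree one case, where $\mathfrak{X}_{-1}(\gM) \simeq \Gamma(A)$ is indeed spanned over $C^\infty(M)$ by the $\partial_a$; but the theorem is stated for arbitrary $\N$-manifolds.)

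The paper closes this gap with an additional induction that your proposal is missing. Computing $L_X\omega$ and $i_X\omega$ for a general $X = X^a\partial_a$ from the reconstruction formula, and using (\ref{Eq2}) and (\ref{Eq3}) to rewrite $L_X D(\partial_a)$ and $L_X\ell(\partial_a)$, one finds
$L_X\omega = \frac{n+|X|}{n}D(X) + R(X)$ and $i_X\omega = \frac{n+|X|}{n}\ell(X) + r(X)$,
where $R(X)$, $r(X)$ are correction terms involving $D([\partial_a,X])$ and $\ell([\partial_a,X])$. For $X = z^{b_1}\cdots z^{b_k}\partial_b$ one argues by induction on $k$: the brackets $[\partial_a,X]$ have fewer $z$-factors, so the induction hypothesis identifies $D([\partial_a,X]) = L_{[\partial_a,X]}\omega$ and $\ell([\partial_a,X]) = i_{[\partial_a,X]}\omega$, whence a direct computation shows $R(X) = -\frac{|X|}{n}L_X\omega$ and $r(X) = -\frac{|X|}{n}i_X\omega$; since $k>0$ forces $|X| > -n$, the coefficient $\frac{n+|X|}{n}$ is nonzero and one can solve for $L_X\omega = D(X)$ and $i_X\omega = \ell(X)$. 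Without this inductive step the verification does not reach all of $\mathfrak{X}_-(\gM)$, so you should replace your one-line extension claim by this argument (or restrict the statement you prove to degree one $\N$-manifolds).
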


\begin{remark}
By induction on $n$, Theorem \ref{Theorem1} provides a description of $E$-valued differential forms in terms of non-graded data. Indeed, $D$ and $\ell$ take values in lower degree forms and one can use degree zero forms, namely $E$-valued forms on $M$ as base of induction.
\end{remark}

\begin{proof}
Let $\omega$ be a degree $n$, $E$-valued differential $k$-form on $\gM$. Define $D : \mathfrak{X}_- (\gM) \rightarrow \Omega^k (\gM, E)$ and $\ell : \mathfrak{X}_- (\gM) \rightarrow \Omega^{k-1} (\gM, E)$ by putting
\begin{equation}\label{Eq7}
D(X) := L_X \omega, \quad \text{and} \quad \ell(X):= i_X \omega, \quad X \in \mathfrak{X}_- (\gM).
\end{equation}
Properties (\ref{Eq1}), (\ref{Eq2}), (\ref{Eq3}), and (\ref{Eq4}) immediately follow from identities (\ref{Eq6}), and (\ref{Eq5}).

Conversely, let $D$ and $\ell$ be as in the statement of the theorem, and prove that there exists a unique degree $n$ differential form $\omega \in \Omega^k (\gM, E)$ fulfilling (\ref{Eq7}). We propose a local proof. One can pass to the global setting by partition of unity arguments. Let $\ldots, z^a, \ldots$ be positively graded coordinates on $\gM$ and $\partial_a := \partial / \partial z^a$. In particular the grading derivation $\Delta_\gE$ is locally given by
\[
\Delta_\gE = |z^a| z^a \partial_a .
\]
Moreover, $\mathfrak{X}_-(\gM)$ is locally generated, as a $C^\infty (M)$-module, by vector fields
\[
z^{b_1} \cdots z^{b_k} \partial_b, \quad |z^{b_1}| + \cdots + |z^{b_k}| - |z^b|  < 0.
\]
Put
\[
\omega := \frac{|z^a|}{n} \left(  z^a D(\partial_a) + dz^a \ell(\partial_a) \right)
\]
and prove (\ref{Eq7}). First of all, for $X = X^a \partial_{a}$,
\begin{align*}
L_X \omega & = \frac{|z^a|}{n}L_X \left( z^a D(\partial_a) + dz^a \ell(\partial_a) \right) \\
                    & = \frac{|z^a|}{n} \left( X^a D(\partial_a) + (-)^{z^a X} L_X D(\partial_a) \right. \\
                    & \quad\  \left. + (-)^X dX^a \ell (\partial_a) + (-)^{X(z^a+1)}dz^a L_X \ell(\partial_a) \right) .
\end{align*}
In view of  (\ref{Eq2}) and (\ref{Eq3}),
\[
(-)^{z^aX}L_X D(\partial_a) = L_{\partial_a} D(X) - D([\partial_a,X])
\]
and
\[
\quad (-)^{X(z^a+1)} L_X \ell(\partial_a) = i_{\partial_a}D(X) - (-)^{X}\ell([\partial_a,X]),
\]
so that
\begin{align}
L_X \omega  \nonumber & = \frac{1}{n} L_{\Delta_{\gE}} D(X) + \frac{|z^a|}{n} \left( X^a D(\partial_a) + (-)^X dX^a \ell(\partial_a) \right. \nonumber \\
                                       & \quad\ \left. - z^a D([\partial_a,X]) - (-)^Xdz^a \ell([\partial_a,X])    \right)  \nonumber \\
                     & = \frac{n+|X|}{n}  D(X) + \frac{|z^a|}{n} \left( X^a D(\partial_a) + (-)^X dX^a \ell(\partial_a) \right. \nonumber \\
                      & \quad\ \left. - z^a D([\partial_a,X]) - (-)^Xdz^a\ell([\partial_a,X])  \right). \label{Eq8}
\end{align}
Similarly
\begin{equation}
i_X \omega = \frac{n+|X|}{n} \ell (X) + \frac{|z^a|}{n} \left( X^a \ell (\partial_a) - z^a \ell ([\partial_a,X]) \right). \label{Eq9}
\end{equation}
Using $X = \partial_b$ in Equations (\ref{Eq8}) and (\ref{Eq9}) gives
\begin{equation}
L_{\partial_b} \omega = D(\partial_b), \quad \text{and} \quad i_{\partial_b} \omega = \ell (\partial_b). \label{Eq10}
\end{equation}
In view of identity (\ref{Eq1}), in order to prove (\ref{Eq7}), it is enough to restrict to vector fields $X$ of the form $z^{b_1} \cdots z^{b_k} \partial_b$. This case can be treated by induction on $k$, using Equations (\ref{Eq10}) as base of induction. Namely, use $X =  z^{b_1} \cdots z^{b_k} \partial_b$ in (\ref{Eq8}), with $k>0$. Since
\[
[\partial_a,  X] = \sum_i (-)^{(b_1 + \cdots + b_i) z^a} \delta^{b_i}_a z^{b_1} \cdots \widehat{z^{b_i}} \cdots z^{b_k} \partial_b,
\]
where a hat \oq $\widehat{-}$\cq\ denotes omission, by induction hypothesis we have
\[
D([\partial_a, X]) =L_{[\partial_a, X]} \omega, \quad \text{and} \quad \ell([\partial_a, X])=i_{[\partial_a, X]}\omega .
\]
A direct computation shows that the second summand in the right hand side of (\ref{Eq8}) is equal to
$
- \frac{|X|}{n} L_X \omega
$.
Similarly, the second summand in the right hand side of (\ref{Eq9}) is equal to
$
- \frac{|X|}{n} i_X \omega
$.
Notice that, since $k>0$, then $|X| > -n$ and one can conclude that $L_X \omega = D(X)$, and, similarly, $i_X \omega = \ell(X)$.

To prove uniqueness, it is enough to show that a degree $n$ differential form $\omega$ with values in $E$ is completely determined by contraction with and Lie derivative along negatively graded derivations. Thus,
\[
n \omega = L_{\Delta_\gE} \omega = |z^a| ( z^a L_{\partial_a} \omega + dz^a i_{\partial_a} \omega).
\]
In particular $\omega$ is completely determined by $L_{\partial_a} \omega$ and $i_{\partial_a} \omega$.
\end{proof}

We will refer to the data $(D, \ell)$ corresponding to a vector valued form $\omega$ as the \emph{Spencer data} of $\omega$. Indeed, as we will show in Section \ref{SecHigher}, they are a vast generalization of the \emph{Spencer operators} considered in \cite{css12,s13}.

\begin{example}\label{Examp1bis}
Let $E \to M$ be a non-graded vector bundle equipped with a flat connection $\nabla$, and let $\gM$ be an $\N$-manifold. As discussed in Section \ref{SecCartan}, $\nabla$ induces a flat connection in the graded vector bundle $\gM \times_M E \to \gM$ which we denote again by $\nabla$. In its turn, the induced connection determines an homological derivation $d_\nabla$ of the vector bundle $T[1]\gM \times_M E \rightarrow T[1]\gM$ of $E$-valued forms on $\gM$. Notice that $d_\nabla$ maps $k$-forms to $(k+1)$-forms. Now, let $\omega \in \Omega^k (\gM, E)$ and let $(D, \ell)$ be the corresponding Spencer data. We want to describe the Spencer data $(D', \ell')$ of $d_\nabla \omega$. To do this, we first observe that a discussion similar to that in Remark \ref{rem:Delta_E} shows that, whatever $\nabla$, the covariant derivative along a negatively graded vector field $X \in \mathfrak X_- (\gM)$ satisfies $\nabla_X = X$. Hence, from (\ref{eq:nabla})
\[
D' (X) = L_X d_\nabla \omega = L_{\nabla_X} d_\nabla \omega = d_\nabla L_X \omega = d_\nabla D(X)
\]
and
\[
\ell' (X) = i_X d_\nabla \omega = i_{\nabla_X} d_\nabla \omega = L_X \omega - (-)^{|X|} d_\nabla i_X \omega = D(X) - (-)^{|X|} d_\nabla \ell (X),
\]
which completely describe $(D',\ell')$ in terms of $(D,\ell)$ and $d_\nabla$.
\end{example}

\begin{example}\label{Examp2}
Let $E \rightarrow M$ be a non-graded vector bundle. The first jet bundle $J^1 E \rightarrow M$ fits in an exact sequence 
\begin{equation}\label{Eq11}
0 \longrightarrow \Omega^1 (M,E) \longrightarrow \Gamma (J^1 E) \overset{p}{\longrightarrow} \Gamma (E) \longrightarrow 0
\end{equation}
of $C^\infty (M)$-linear maps, where $p$ is the canonical projection. Sequence (\ref{Eq11}) splits (beware, \emph{over $\mathbb R$} not over $C^\infty (M)$) via the universal first order differential operator $j^1 : \Gamma (E) \rightarrow \Gamma (J^1 E)$. Accordingly, there is a first order differential operator $S : \Gamma(J^1 E) \rightarrow \Omega^1 (M,E)$ sometimes called the \emph{Spencer operator}.
The degree $n$ $\N$-manifold $\gM = J^1 E[n] $ comes equipped with an $E$-valued, degree $n$ \emph{Cartan $1$-form} $\theta$. In order to define $\theta$ recall that negatively graded vector fields on $\gM$ are concentrated in degree $-n$, and $\mathfrak{X}_{-n} (\gM)$ identifies with $\Gamma (J^1 E)$ as a $C^\infty (M)$-module. Now, $\theta$ is uniquely defined by the following properties
\begin{equation}
i_{j^1 e} \theta = e, \quad \text{and} \quad L_{j^1 e} \theta = 0, \label{Eq12}
\end{equation}
for all $e \in \Gamma (E)$. It immediately follows from (\ref{Eq12}) that the Spencer data $(D, \ell)$ of $\theta$ identify with $(-)^n$ times the Spencer operator $S : \Gamma (J^1 E) \rightarrow \Omega^1 (M,E)$ and the projection $\Gamma(J^1 E) \rightarrow \Gamma (E)$ respectively.
\end{example}

\begin{example}\label{Examp1}
Let $M$ be a non-graded manifold. The degree $n$ $\N$-manifold $\gM = T^\ast [n] M$ comes equipped with the obvious tautological, degree $n$ $1$-form $\vartheta$. Consider the degree $n$ $2$-form $\omega = d\vartheta$. Negatively graded vector fields on $\gM$ are concentrated in degree $-n$, and $\mathfrak{X}_{-n} (\gM)$ is naturally isomorphic to $\Omega^1 (M)$ as a $C^\infty (M)$-module. It is easy to see that $\omega$ is uniquely defined by the following properties
\begin{equation}\label{Eq13}
i_{df} \omega = df, \quad \text{and} \quad L_{df} \omega = 0,
\end{equation}
for all $f \in C^\infty (M)$. It immediately follows from (\ref{Eq13}) that the Spencer data $(D,\ell)$ of $\omega$ identify with $(-)^n$ times the exterior differential $d : \Omega^1 (M) \rightarrow \Omega^2 (M)$ and the identity $\mr{id} : \Omega^1 (M) \rightarrow \Omega^1 (M)$ respectively.
\end{example}

\begin{example}\label{Examp3}
Let $E \rightarrow M$ be a non-graded vector bundle equipped with a flat connection $\nabla$. The degree $n$ $\N$-manifold $\gM = T^\ast [n] M \otimes E$ is equipped with a tautological, degree $n$ $E$-valued $1$-form $\vartheta$. Flat connection $\nabla$ induces a flat connection in the graded vector bundle $\gM \times_M E \rightarrow \gM$ which we denote again by $\nabla$. Consider the homological derivation $d_\nabla$ as in Example \ref{Examp1bis}. Notice that $d_\nabla$ agrees with the de Rham differential of $\nabla$ on degree zero forms, i.e.~elements of $\Omega (M,E)$. Consider the degree $n$ $2$-form $\omega = d_\nabla \vartheta$ with values in $E$. Negatively graded vector fields on $\gM$ are concentrated in degree $-n$, and $\mathfrak{X}_{-n} (\gM)$ is isomorphic to $\Omega^1 (M,E)$ as a $C^\infty (M)$-module. It is easy to see that $\omega$ is uniquely defined by the following properties
\begin{equation}\label{Eq14}
i_{d_\nabla e} \omega = d_\nabla e, \quad \text{and} \quad L_{d_\nabla e} \omega = 0,
\end{equation}
for all $e \in \Gamma (E)$. It immediately follows from (\ref{Eq14}) that the Spencer data $(D,\ell)$ of $\omega$ identify with $(-)^n$ times the de Rham differential $d_\nabla : \Omega^1 (M,E) \rightarrow \Omega^2 (M,E)$ and the identity $\mr{id} : \Omega^1 (M,E) \rightarrow \Omega^1 (M,E)$ respectively.
\end{example}

In the three remaining sections we use Theorem \ref{Theorem1} (and Proposition \ref{prop:comp} below) to describe degree one $\N Q$-manifolds equipped with a compatible vector valued differential form (see below) in terms of non-graded data. In particular, we manage to give alternative proofs of known results about compatible contact structures \cite{g13,m13}, involutive distributions \cite{zz12}, and symplectic forms \cite{r02} on degree one $\N Q$-manifolds. We also manage to find new results about compatible, presymplectic and locally conformal symplectic structures, and, more generally, higher order vector valued forms on degree one $\N Q$-manifolds. It turns out (Theorem \ref{TheorSO}) that a compatible degree one differential $k$-form on a degree one $\N Q$-manifold $(\gM , Q)$ is equivalent to a Lie algebroid equipped with a structure recently identified in \cite{css12} as the infinitesimal counterpart of a multiplicative vector valued form on a Lie groupoid (see also \cite{s13}), namely, a \emph{$k$-th order Spencer operator}.

Let $\gM$ be an $\N$-manifold, with degree zero shadow $M$, and let $(\gE, \mathbb{Q})$ be an $\N Q$-vector bundle over it. We denote by $Q$ the symbol of $\mathbb Q$.

\begin{definition}\label{def:comp}
An $\gE$-valued differential form on $\gM$, $\omega$, is \emph{compatible with $\mathbb{Q}$} if $L_\mathbb{Q} \omega = 0$. 
\end{definition}

Suppose $\gE$ is degree zero. Then $\gE = \gM \times_M E$ for a non-graded vector bundle $E \to M$. For later use, we conclude this section expressing the compatibility of an $E$-valued form $\omega$ on $\gM$ with $\mathbb{Q}$ in terms of Spencer data.

\begin{proposition}\label{prop:comp}
Let $\omega \in \Omega^k (\gM, E)$ be a degree $n > 0$, $E$-valued $k$-form on $\gM$, and let $(D, \ell)$ be its Spencer data. Then $\omega$ is compatible with $\mathbb Q$, i.e.~$L_{\mathbb Q}\omega = 0$, iff
\begin{align}
A(X,Y):= {}& D ([[Q, X], Y]) - L_{[\mathbb Q, X]} D(Y) \nonumber \\
               & - (-)^{|X||Y|} \left(L_{[\mathbb Q, Y]} D(X) - L_{\mathbb Q} L_Y D(X) \right)  = 0, \label{C1}
\end{align}
\begin{align}
 B(X,Y) :={} & \ell ([[Q, X], Y]) - (-)^{|Y|}i_{[Q, X]} D(Y) \nonumber \\
 & - (-)^{|X||Y|} \left(L_{[\mathbb Q, Y]} \ell(X)  - L_{\mathbb Q} L_Y \ell(X) \right) = 0, \label{C2}
\end{align}
\begin{align}
 C(X,Y)  := i_{[Q, X]} \ell (Y) + (-)^{(|X|-1)(|Y| -1)} \left(i_{[Q, Y]} \ell (X) - L_{\mathbb Q} i_Y \ell (X) \right)  = 0. \label{C3}
\end{align}
\end{proposition}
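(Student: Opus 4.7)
The plan is to translate the equation $L_{\mathbb Q}\omega=0$, which lives in $\Omega^k(\gM,E)$ in internal degree $n+1>0$, into equivalent equations on the Spencer data $(D,\ell)$ of $\omega$ by iterating the uniqueness half of Theorem~\ref{Theorem1}. A first application gives $L_{\mathbb Q}\omega=0$ iff $L_X L_{\mathbb Q}\omega=0$ and $i_X L_{\mathbb Q}\omega=0$ for every $X\in\mathfrak X_-(\gM)$. Each of these is again an $E$-valued form of positive degree $n+1+|X|$ (automatically zero for the "too negative" $X$ for which the corresponding $A,B,C$ vanish tautologically too), so a second application reduces $L_{\mathbb Q}\omega=0$ to the simultaneous vanishing, for all $X,Y\in\mathfrak X_-(\gM)$, of the four iterated expressions $L_YL_X L_{\mathbb Q}\omega$, $i_YL_X L_{\mathbb Q}\omega$, $L_Yi_X L_{\mathbb Q}\omega$ and $i_Yi_X L_{\mathbb Q}\omega$. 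The third is redundant: the Cartan identity $[L_Y,i_X]=i_{[Y,X]}$ rewrites $L_Yi_X L_{\mathbb Q}\omega$ as $(-)^{|Y|(|X|-1)}i_X L_Y L_{\mathbb Q}\omega+i_{[Y,X]}L_{\mathbb Q}\omega$, whose second summand is handled by downward induction on $|X|+|Y|$ using the fourth equation (applied at the more negative bracket $[Y,X]\in\mathfrak X_-(\gM)$).

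The second step is to rewrite each of the three surviving iterated derivatives purely in terms of $D$, $\ell$ and the operators $L_{\mathbb Q}$, $L_{[\mathbb Q,-]}$, $i_{[\mathbb Q,-]}$. The tools are the graded Cartan identities (\ref{Eq6}), the graded Jacobi identity on vector fields, and the defining relations $L_Z\omega=D(Z)$, $i_Z\omega=\ell(Z)$ valid for $Z\in\mathfrak X_-(\gM)$. The key combinatorial observation is that, although the single bracket $[\mathbb Q,X]$ has non-negative degree $|X|+1\ge 0$ and generally fails to lie in $\mathfrak X_-(\gM)$, the double bracket $[[Q,X],Y]$ has degree $|X|+|Y|+1\le -1$ and therefore does lie in $\mathfrak X_-(\gM)$; accordingly $L_{[[Q,X],Y]}\omega$ and $i_{[[Q,X],Y]}\omega$ may legitimately be replaced by $D([[Q,X],Y])$ and $\ell([[Q,X],Y])$ at the end of the computation. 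Moving $L_{\mathbb Q}$ past $L_X,L_Y,i_X,i_Y$ via (\ref{Eq6}) and reorganising via the Jacobi identity, a direct calculation identifies, up to an overall sign depending only on $|X|$ and $|Y|$, $L_YL_X L_{\mathbb Q}\omega$ with $A(X,Y)$, $i_YL_X L_{\mathbb Q}\omega$ with $B(X,Y)$, and $i_Yi_X L_{\mathbb Q}\omega$ with $C(X,Y)$. Combining the two steps yields the claim.

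The main obstacle is purely bookkeeping: the graded signs produced by repeated applications of (\ref{Eq6}), of the Jacobi identity, and of the conversion $[X,\mathbb Q]=-(-)^{|X|}[\mathbb Q,X]$, together with a careful tracking of which brackets of $X$, $Y$ and $\mathbb Q$ are already in $\mathfrak X_-(\gM)$ (so that the Spencer data of $\omega$ can be evaluated on them) and which are not (so that the operators $L_{[\mathbb Q,-]}$ and $i_{[\mathbb Q,-]}$ must be retained as primitive terms, as indeed they appear in $A$, $B$ and $C$). No conceptual input beyond Theorem~\ref{Theorem1} and the Cartan calculus recalled in Section~\ref{SecCartan} is required.
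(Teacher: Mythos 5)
Your proposal follows essentially the same route as the paper's proof: reduce $L_{\mathbb Q}\omega=0$ to the vanishing of the four iterated expressions $L_YL_XL_{\mathbb Q}\omega$, $i_YL_XL_{\mathbb Q}\omega$, $L_Yi_XL_{\mathbb Q}\omega$, $i_Yi_XL_{\mathbb Q}\omega$ via the uniqueness half of Theorem \ref{Theorem1}, discard one mixed expression as redundant through the Cartan identities, and match the remaining three with $A$, $B$, $C$ up to overall signs. The only (harmless) divergence is that the paper discards $i_YL_XL_{\mathbb Q}\omega$ and identifies $L_Yi_XL_{\mathbb Q}\omega$ with $B(X,Y)$, whereas you do the opposite; note that $i_YL_XL_{\mathbb Q}\omega$ reproduces $B(Y,X)$ only modulo an $i_{[X,Y]}L_{\mathbb Q}\omega$ correction, which is absorbed by exactly the redundancy argument you already sketch, so the resulting set of conditions (quantified over all $X,Y$) is the same.
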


\begin{remark}
By induction on $n$, Proposition \ref{prop:comp} provides a description of the compatibility condition between $\omega$ and $\mathbb Q$ in terms of non-graded data (see also Remark \ref{rem:homotopy} below). Notice that, when $|\omega| = 1$, the last summand in (\ref{C1}), (\ref{C2}) and (\ref{C3}) vanishes by degree reasons.
\end{remark}

\begin{proof}
First of all, notice that $[[\mathbb Q, X], Y]$ is negatively graded for all $X,Y$. Hence it identifies with $[[Q,X], Y]$. In particular, the left hand side of (\ref{C1}), (\ref{C2}) and (\ref{C3}) are well-defined. Now, for any $\omega$ as in the statement, $L_{\mathbb Q} \omega$ is a degree $n+1$ form. Since every positive degree form on $\gM$ is completely determined by its Spencer data, then $L_{\mathbb Q} \omega$ vanishes iff 
\[
L_YL_X L_{\mathbb Q} \omega = i_Y L_X L_{\mathbb Q} \omega = L_Y i_X L_{\mathbb Q} \omega = i_Y i_X L_{\mathbb Q} \omega = 0,
\]
for all $X,Y \in \mathfrak X_- (\gM)$. It immediately follows from the second Cartan identity (\ref{Eq6}) that condition $i_Y L_X L_{\mathbb Q} \omega = 0$ is actually redundant. It remains to compute $L_YL_X L_{\mathbb Q} \omega$, $L_Y i_X L_{\mathbb Q} \omega$, and $i_Y i_X L_{\mathbb Q} \omega$. So
\begin{align*}
 L_Y L_X L_\mathbb{Q} \omega  = {} &L_Y L_{[X,\mathbb{Q}]} \omega -(-)^{|Y|} L_Y L_\mathbb{Q} L_X \omega \\
                                                    = {} &(-)^{|X|+|Y|(|X|+1)}\left( L_{[[\mathbb{Q},X],Y]} \omega  -  L_{[\mathbb{Q},X]} L_Y \omega \right) \\
                                                    & - (-)^{|X|+|Y|} \left( L_{[\mathbb{Q},Y]} L_X \omega - L_{\mathbb Q} L_Y L_X \omega \right),
\end{align*}
which differs from $A(X,Y)$ in  (\ref{C1}) for an overall sign $(-)^{|X|+|Y|(|X|+1)}$.
Similarly,
\begin{align*}
 L_Y i_X L_\mathbb{Q} \omega  = {} & (-)^{|X|}L_Y i_{[\mathbb{Q}, X]} \omega - (-)^{|X|}L_Y L_\mathbb{Q} i_X \omega \\
                                                    ={} &  (-)^{(|X|+1)(|Y|+1)}\left(i_{[[\mathbb{Q},X],Y]} \omega - (-)^{|Y|} i_{[\mathbb{Q},X]} L_Y \omega \right) \\
                                                    & + (-)^{|X|+|Y|} \left(  L_{[\mathbb{Q},Y]} i_X \omega - L_{\mathbb Q} L_Y i_X \omega \right),
\end{align*}
which differs from $B(X,Y)$ in (\ref{C2}) for an overall sign $(-)^{(|X|+1)(|Y|+1)}$. Finally,
\begin{align*}
 i_Y i_X L_\mathbb{Q} \omega
											& = (-)^{|X|} i_Y i_{[\mathbb Q,X]} \omega -(-)^{|X|} i_Y L_\mathbb{Q} i_X \omega \\
                                                   	& = (-)^{|X| |Y|} i_{[\mathbb{Q},X]}  i_Y \omega - (-)^{|X|+|Y|}\left( i_{[\mathbb{Q},Y]} i_X - L_{\mathbb Q}i_Yi_X \omega \right),                                          
\end{align*}
which differs from $C(X,Y)$ in (\ref{C3}) for an overal sign $(-)^{|X||Y|}$. This concludes the proof.
\end{proof}

\begin{remark}\label{rem:homotopy}
When $\gM$ is the total space of a negatively graded vector bundle $V \to M$ (which is always the case up to a non-canonical isomorphism), an homological vector field on $\gM$ is the same as an $L_\infty$-algebroid structure on $\Gamma (V^\ast)$ (see, e.g., \cite{bp12, b11, sss09, vit14b}). We conjecture the existence of formulas expressing the compatibility between $\omega$ and $\mathbb Q$ in terms of the higher brackets (and the anchor) of this $L_\infty$-algebroid, and the Spencer data of $\omega$. Similarly, when no isomorphism $\gM \simeq V$ is assigned, there should be formulas involving \emph{Getzler higher derived brackets} on $\mathfrak X_- (\gM)$ \cite{G10}. Finding these formulas goes beyond the scopes of this paper and we postpone this task to a subsequent publication.
\end{remark}

\section{Vector Valued $1$-forms on $\N Q$-manifolds}\label{Sec1}
\subsection{Vector valued $1$-forms and distributions}
Let $\gM$ be a degree $n$ $\N$-manifold, $n>0$, and let $(\mathcal{E} = \gM \times_M E , \mathbb{Q})$ be a degree zero $\N Q$-vector bundle over it. According to Definition \ref{def:comp}, a degree $n$ $1$-form $\theta$ with values in $E$ is \emph{compatible} with $\mathbb{Q}$ if, by definition, $L_\mathbb{Q} \theta = 0$. Several interesting geometric structures are described by compatible $1$-forms. For instance, compatible distributions on an $\N Q$-manifold are equivalent to surjective compatible $1$-forms. Namely, Let $(\gM, Q)$ be a degree $n$ $\N Q$-manifold, and let $\mathcal{D} \subset T\gM$ be a distribution on $\gM$. Consider the normal bundle $T\gM / \mathcal{D}$. Projection $T \gM \rightarrow T \gM / \mathcal{D}$ can be interpreted as a degree zero, surjective $1$-form with values in $T \gM / \mathcal{D}$. We say that $\mathcal{D}$ is \emph{co-generated} in degree $k$ if $T \gM / \mathcal{D}$ is generated in degree $-k$. In this case, $T \gM / \mathcal{D} = \gM \times_M E[k]$ for a suitable non-graded vector bundle $E \rightarrow M$, and the projection $\theta_\mathcal{D} : T \gM \rightarrow \gM \times_M E$ can be interpreted as a degree $k$, surjective, $E$-valued $1$-form such that $\ker \theta_\mathcal{D} = \mathcal{D}$. Conversely, if $E \rightarrow M$ is a non-graded vector bundle and $\theta$ is a degree $k$, surjective $1$-form with values in $E$, then $\mathcal{D} := \ker \theta$ is a distribution such that $\theta_\mathcal{D} = \theta$.
\begin{definition}
A distribution $\mathcal{D}$ on $\gM$, co-generated in degree $n$, is \emph{compatible} with $Q$ if $[Q, \Gamma (\mathcal{D})] \subset \Gamma (\mathcal{D})$.
\end{definition}
Now, let $\mathcal{D}$ be a distribution co-generated in degree $n$ and $E$ be such that $T \gM / \mathcal{D} = \gM \times_M E [-n]$. If $\mathcal{D}$ is compatible with $Q$, then the commutator with $Q$ restricts to an homological derivation of $\mathcal{D}$, hence it also descends to an homological derivation of the vector bundle $\gM \times_M E $ which we denote by $\mathbb{Q}$. Moreover, $L_\mathbb{Q}\theta_\mathcal{D} = 0$.  Indeed, for every vector field $X \in \mathfrak X (\gM)$,
\begin{equation}\label{eq:L_Qtheta}
i_X L_{\mathbb Q} \theta_\mathcal{D} = (-)^{|X|} \left(i_{[Q,X]} \theta_\mathcal{D} - \mathbb Q (i_X \theta_\mathcal{D}) \right) = 0.
\end{equation}
 
Conversely, if $(\gE , \mathbb{Q})$ is a degree zero $\N Q$-bundle and $\theta$ is a degree $n$ surjective $1$-form with values in $E$, then , it follows from (\ref{eq:L_Qtheta}) that  $L_\mathbb{Q} \theta = 0$ iff 1) $\ker \theta$ is a distribution compatible with the symbol $Q$ of $\mathbb{Q}$, and 2) $\mathbb{Q}$ is induced on $\Gamma (\gE)$ by the adjoint operator $[Q , -]$ on $\mathfrak{X}(\gM)$. One concludes that compatible distributions are the same as compatible surjective $1$-forms.

\begin{remark}\label{rem:symmetries}
The above discussion is actually independent of the degree of $Q$. Hence, it shows that an infinitesimal symmetry of $\mathcal{D}$, i.e.~any vector field $X$ such that $[X, \Gamma (\mathcal{D})] \subset \Gamma (\mathcal{D})$, determines a derivation ${\mathbb X}$ of $T\gM/\mathcal{D}$ via 
\[
{\mathbb X} ( Y \modu \mathcal{D} ) := [X,Y] \modu \mathcal{D} .
\]
The symbol of ${\mathbb X}$ is precisely $X$. Moreover, one can compute the Lie-derivative $L_{\mathbb X} \theta_{\mathcal D}$, and find $L_{\mathbb X} \theta_{\mathcal D} = 0$.
\end{remark}

Now recall that a distribution $\mathcal{D}$ on a (graded) manifold $\gM$ comes equipped with a \emph{curvature form}
\[
\omega_\mathcal{D} : \Gamma (\mathcal{D}) \times \Gamma (\mathcal{D}) \longrightarrow \Gamma (T\gM / \mathcal{D}), \quad (X,Y) \longmapsto [X,Y]\ \mr{mod}\ \Gamma (\mathcal{D}).
\]
The curvature form $\omega_\mathcal{D}$ measures how far is $\mathcal{D}$ from being involutive. The two limit cases $\omega_\mathcal{D}$ non-degenerate and $\omega_\mathcal{D} = 0$ are of a special interest. The first one corresponds to \emph{maximally non-integrable distributions}, the second one to \emph{involutive distributions}.
 
\subsection{Degree one contact $\N Q$-manifolds}
Let $\gM$ be an $\N$-manifold and let $C$ be an hyperplane distribution on it. Since $\gL := T\gM / C$ is a line bundle, then it is generated in one single degree. A \emph{degree $n$ contact $\N$-manifold} is an $\N$-manifold $\gM$ equipped with a \emph{degree $n$ contact structure}, i.e.~an hyperplane distribution $C$, such that the line bundle $\gL := T\gM / C$ is generated in degree $n$, and the curvature form $\omega_C$ is non-degenerate (see \cite{g13} for an alternative definition exploiting the ``symplectization trick'').

\begin{example}
Let $L\rightarrow M$ be a non-graded line bundle. The kernel of the Cartan form $\theta$ on $J^1 L[n]$ (see Example \ref{Examp2}) is a degree $n$ contact structure.
\end{example}

It follows from the definition that, if $(\gM, C)$ is a degree $n$ contact $\N$-manifold, then the degree of $\gM$ is at most $n$. When $\gL$ is a trivial line bundle, $C$ is the kernel of a (no-where vanishing) one-form $\alpha$ which can be used to simplify the theory significantly (see \cite{m13}). In this case the contact structure is said to be \emph{co-orientable} and a choice of $\alpha$ provides a co-orientation (i.e.~an orientation of $\gL$). In the general case, $\gL := \gM \times_M L[n]$ for a non-graded line bundle $L \rightarrow M$, and $C$ is the kernel of a (degree $n$) $1$-form $\theta_C$ with values in a (generically non trivial) line bundle $L$.

A degree $n$ contact structure on $\gM$ determines a non-degenerate degree $-n$ Jacobi bracket $\{-,-\}$ on $\Gamma (\gL)$, i.e.~a degree $-n$ Lie bracket which is a graded first order differential operator in each entry and such that the associated morphism $J^1 \gL \otimes J^1 \gL \rightarrow \gL$ is non-degenerate (see also Appendix \ref{Ap1}). For the details about how to define the Jacobi bracket $\{-,-\}$ from $C$ in the non-graded case see, for instance, \cite{cs13}. The generalization to the graded case can be carried out straightforwardly and the obvious details are left to the reader. A \emph{degree $n$ contact $\N Q$-manifold} is a degree $n$ contact manifold $(\gM , C)$ equipped with an homological vector field $Q$ such that $[Q, \Gamma (C)] \subset \Gamma (C) $, in other words it is an $\N Q$-manifold equipped with a compatible degree $n$ contact structure. If $(\gM, C, Q)$ is a contact $\N Q$-manifold, the homological vector field $Q$ induces an homological derivation $\mathbb{Q}$ of $\gL$ as discussed above. Thus,
equivalently, a degree $n$ contact $\N Q$-manifold is a degree $n$ contact manifold $(\gM , C)$ equipped with an homological derivation $\mathbb{Q}$ of $\gL$ such that $L_{\mathbb{Q}} \theta_C = 0$.

\begin{theorem}[Mehta \cite{m13} (in the co-oriented case only) and, independently, Grabowski \cite{g13} (in the general case)] \label{Gra}
Every degree one contact $\N$-manifold $(\gM, C)$ is of the kind $(J^1 L[1], \ker \theta)$, up to contactomorphisms, where $L \rightarrow M$ is a (non-graded) line bundle, and $\theta$ is the Cartan form on $J^1 L [1]$. Moreover, there is a one-to-one correspondence between degree one contact $\N Q$-manifolds and (non-graded) manifolds equipped with an abstract Jacobi structure (see the appendixes).
\end{theorem}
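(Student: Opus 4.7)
The plan is to combine Theorem \ref{Theorem1} with the classical correspondence between Jacobi brackets on a line bundle $L \to M$ and a particular class of Lie algebroid structures on $J^1 L$. I would proceed in two stages: first classify the underlying contact $\N$-manifold, and then incorporate the homological vector field.

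For the first sentence, a degree one $\N$-manifold has the form $\gM = A[1]$ for some vector bundle $A \to M$, and the hypothesis that $T\gM/C$ is generated in degree $-1$ allows us to write $T\gM/C = \gM \times_M L[1]$ for a non-graded line bundle $L \to M$, so the projection is a surjective, degree one, $L$-valued 1-form $\theta_C$ on $\gM$. By Theorem \ref{Theorem1}, the Spencer data $(D,\ell)$ of $\theta_C$ reduce to a surjective bundle map $p := \ell : A \to L$ together with a first order differential operator $D : \Gamma(A) \to \Omega^1(M,L)$ whose symbol, forced by (\ref{Eq1}) with $|X|=-1$, is $-p$. I then define a bundle map $\Phi : A \to J^1 L$ by $\Phi(X) := j^1(p(X)) + D(X)$; the identity $j^1(fs) = f j^1 s + df\otimes s$ cancels the non-linear term in $D(fX)$, making $\Phi$ genuinely $C^\infty(M)$-linear. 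A direct calculation against Example \ref{Examp2} shows that the Spencer data of $\Phi^* \theta$, for $\theta$ the Cartan form on $J^1 L[1]$, are $(-D, p)$, so $\Phi_*$ carries $C = \ker \theta_C$ to $\ker \theta$.

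To establish that $\Phi$ is a bundle isomorphism, I would use non-degeneracy of the curvature form. Applying the Cartan magic formula $L_X = [i_X, d]$ to $\theta_C$ shows that $D|_{\ker p}$ is automatically $C^\infty(M)$-linear, and, for $X \in \Gamma(\ker p)$ regarded as a degree $-1$ vector field and $Y$ a degree zero lift of a vector field on $M$, one computes $\omega_C(X,Y) = \pm D(X)(Y)$. Non-degeneracy of $\omega_C$ then forces $D|_{\ker p} : \ker p \to T^*M \otimes L$ to be a bundle isomorphism, whence $\mr{rk}\,A = \dim M + 1 = \mr{rk}\,J^1 L$; a short rank argument combined with surjectivity of $p$ closes the case and proves the first claim.

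For the correspondence with Jacobi structures, having identified $\gM$ with $J^1 L[1]$, a homological vector field $Q$ on $\gM$ is equivalent to a Lie algebroid structure on $J^1 L$, and the compatibility $[Q, \Gamma(C)] \subset \Gamma(C)$ descends $[Q,-]$ to a degree one derivation $\mathbb{Q}$ of $\gM \times_M L$ for which $L_\mathbb{Q}\theta = 0$ is automatic by (\ref{eq:L_Qtheta}). I then apply Proposition \ref{prop:comp} to the Cartan form, whose Spencer data is $(\pm S, p_{J^1 L})$, to rewrite $L_\mathbb{Q}\theta = 0$ as the conditions (\ref{C1})--(\ref{C3}) on the bracket and anchor of $J^1 L$. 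Setting $\{s,t\} := \rho(j^1 s)(t)$, where $\rho : J^1 L \to DL$ is the derivation-valued anchor induced by $(Q,\mathbb Q)$, these conditions translate precisely into the statement that $\{-,-\}$ is a Lie bracket on $\Gamma(L)$ which is a first order differential operator in each entry, and non-degeneracy of $\omega_C$ becomes non-degeneracy of the Jacobi symbol. Conversely, the classical prolongation of a Jacobi bracket to a Lie algebroid on $J^1 L$ with a compatible derivation of $L$ yields a pair $(Q,\mathbb Q)$ satisfying (\ref{C1})--(\ref{C3}) by direct computation. The main obstacle will be precisely this last step: extracting exactly the Jacobi axioms (and no spurious extra conditions) from (\ref{C1})--(\ref{C3}) applied to the Cartan Spencer data, while carefully tracking the signs arising from graded Cartan calculus on $J^1 L[1]$.
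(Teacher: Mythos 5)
Your proposal is essentially correct, and its second half (passing the compatibility $L_{\mathbb Q}\theta=0$ through Proposition \ref{prop:comp}, restricting to $X,Y$ of the form $j^1\lambda, j^1\mu$, and reading off the Jacobi axioms in the form $[\![ j^1\lambda, j^1\mu]\!] = j^1(\nabla^L_{j^1\lambda}\mu)$) is the same argument as the paper's. Where you genuinely diverge is in the first half: the paper obtains the diffeomorphism $\gM\simeq J^1L[1]$ by invoking the Jacobi bracket attached to the contact structure, which yields a degree $-1$ isomorphism $J^1\gL\simeq D\gL$ and hence $\Gamma(A)\simeq\Gamma(J^1L)$, and only afterwards checks that the Spencer data match; you instead build the bundle map $\Phi : A\to J^1L$ directly out of the Spencer data $(D,\ell)$ of $\theta_C$ and prove it is an isomorphism by computing $\omega_C(X,Y)=\pm\, i_YD(X)$ for $X\in\Gamma(\ker\ell)$ and $Y$ a degree zero vector field in $C$ projectable onto $M$, so that non-degeneracy of $\omega_C$ forces $D|_{\ker\ell}:\ker\ell\to T^\ast M\otimes L$ to be an isomorphism. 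Your route is more elementary and self-contained (it does not presuppose the construction of the Jacobi bracket from a graded contact structure, which the paper defers to the literature), at the cost of the explicit rank bookkeeping. Two small corrections: with the paper's conventions (Spencer data of the Cartan form equal to $(-S,p)$, Example \ref{Examp2}), your map should be $\Phi(X)=j^1(\ell(X))-D(X)$, not $+D(X)$; as written, $\Phi^\ast\theta$ has Spencer data $(-D,\ell)$ rather than $(D,\ell)$, so it does not reproduce $\theta_C$ and its kernel differs from $C$ wherever $D\neq 0$ --- with the corrected sign $\Phi^\ast\theta$ and $\theta_C$ have identical Spencer data and hence coincide by the uniqueness part of Theorem \ref{Theorem1}. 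Also, the closing remark that ``non-degeneracy of $\omega_C$ becomes non-degeneracy of the Jacobi symbol'' should be dropped: the correspondence is with \emph{arbitrary} abstract Jacobi structures on $L$, since the curvature form of $\ker\theta$ on $J^1L[1]$ is non-degenerate irrespective of the bracket, so no non-degeneracy condition survives on the Jacobi side.
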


Notice that Mehta does only discuss the case when $C$ is co-orientable, i.e.~$T\gM/C$ is globally trivial. Moreover, he selects a contact form, which amount to selecting a global trivialization $T\gM/C \simeq \gM \times \mathbb{R}[1]$ (see \cite{m13} for details). On the other hand, Grabowski discusses the general case (he actually treats the degree two case as well). His proof relies on the ``symplectization trick'' which consists in understanding a contact manifold as a homogeneous symplectic manifold (see \cite{g13}) and then using already known results in the symplectic case. We propose an alternative proof avoiding the ``symplectization trick'' and focusing on the Spencer data of the structure $1$-form of $C$. We refer to \cite{cs13} for details on abstract Jacobi structures.

\begin{proof}
Let $(\gM = A[1], C)$ be a degree one contact $\N$-manifold, and let $\gL = T\gM / C$ be the associated degree one line-bundle. Then $\gL = \gM \times_M L[1]$ for a non-graded line bundle $L \rightarrow M$, and $\theta_C$ is a degree one $L$-valued $1$-form on $\gM$. Denote by $(D, \ell)$ the Spencer data of $\theta_C$. The Jacobi bracket $\{-,-\}$ determines a degree $-1$ graded vector bundle isomorphism between $J^1\gL$ and $D \gL$. Since negatively graded derivations are completely determined by their symbol, this gives an isomorphism $\Gamma (J^1 L) \simeq \mathfrak{X}_{-1}(\gM) \simeq \Gamma (A)$, hence a diffeomorphism $\gM \simeq J^1 L[1]$. It is easy to see that diagram 
\[
\xymatrix { \Gamma(L) \ar@{=}[d] & \Gamma (A) \ar[r]^-D \ar[l]_-{\ell} \ar@{-}[d] \ar@{}@<0.2ex>[d]^{\begin{sideways}$\widetilde{\quad\quad}$\end{sideways}} 
& \Omega^1 (M,L) \ar@{=}[d] \\
                  \Gamma (L) & \Gamma (J^1 L)  \ar[r]^-{-S} \ar[l]_-{p} &  \Omega^1 (M,L) 
}
\]
commutes. This shows that diffeomorphism $\gM \simeq J^1 L [1]$ identifies $\theta_C$ with the Cartan form $\theta$ (see Example \ref{Examp2}), thus proving the first part of the statement. In the following we identify $\gM$ and $J^1 L[1]$. For the second part of the statement, let $\mathbb{Q}$ be an homological derivation of $\gL$ and let $Q$ be its symbol. Moreover, let $(J^1 L, \rho, [\![-,-]\!])$ and $(L, \nabla^L)$ be the Lie algebroid and the Lie algebroid representation associated to $\mathbb{Q}$. We use Proposition \ref{prop:comp} to see when is $(\mathcal{M}, C, \mathbb{Q})$ a contact $\N Q$-manifold. Since $\theta$ is a one form, (\ref{C3}) is automatically satisfied, and $\theta$ is compatible with $\mathbb Q$ iff $A(X,Y) = B(X,Y) = 0$, with $\omega = \theta$ and $X,Y \in \mathfrak{X}_{-1} (\gM) \simeq \Gamma (J^1 L)$. In fact, one can even restrict to $X,Y$ in the form $j^1 \lambda, j^1 \mu$, with $\lambda, \mu \in \Gamma (L)$. In this case, one gets
\[
A(X,Y) = D([[Q,j^1 \lambda], j^1 \mu]) = -S [\![ j^1 \lambda , j^1 \mu ]\!] , 
\]
and
\begin{align*}
B(X,Y) = \ell ([[Q,j^1 \lambda], j^1 \mu]) + L_{[\mathbb Q, j^1 \mu]} \lambda & = p [\![ j^1 \lambda, j^1 \mu]\!] + \nabla^L_{j^1 \mu} \lambda,
\end{align*}
where we used that $\ell (j^1 \lambda) = i_{j^1 \lambda} \theta = \lambda$, and $D (j^1 \lambda) = L_{j^1 \lambda} \theta = 0$    (see Example \ref{Examp2}). Concluding, $(\gM, C, \mathbb{Q})$ is a contact $\N Q$-manifold iff $ p [\![ j^1 \lambda, j^1 \mu]\!] = - \nabla^L_{j^1 \mu} \lambda = \nabla^L_{j^1 \lambda} \mu $ and $S [\![ j^1 \lambda , j^1 \mu ]\!] = 0$, i.e.~iff $(J^1 L, [\![ -,-]\!], \rho)$ is the Lie algebroid associated to a Jacobi structure on $L \rightarrow M$, and $\nabla^L$ is its natural representation (see Appendix \ref{ApLA}).
\end{proof}

\subsection{Involutive distributions on degree one $\N Q$-manifolds}
Compatible involutive distributions (co-generated in degree one) on a degree one $\N Q$-manifold are equivalent to infinitesimally multiplicative (IM) foliations of a special kind. Let $(A, [\![-,-]\!], \rho)$ be a Lie algebroid over a manifold $M$, and let $F \subset TM$ be an involutive distribution. An \emph{IM foliation of $A$ over $F$} \cite{jo11} is a triple consisting of 
\begin{itemize}
\item involutive distribution $F$,
\item a Lie subalgebroid $B \subset A$,
\item a flat $F$-connection $\nabla$ in the quotient bundle $A/B$,
\end{itemize}
such that 
\begin{enumerate}
\item sections $X$ of $A$ such that $X\modu B$ is $\nabla$-flat form a Lie subalgebra in $\Gamma (A)$ with sections of $B$ as a Lie ideal, \label{IMf2}
\item $\rho$ takes values in the stabilizer of $F$,
\item $\rho |_B$ takes values in $F$.
\end{enumerate}
As the terminology suggests, IM foliations are infinitesimal counterparts of involutive multiplicative distributions on Lie groupoids \cite{jo11}. Zambon and Zhu \cite{zz12} proved that IM foliations can be also understood as degree one $\N Q$-manifolds equipped with an involutive distribution preserved by the homological vector field. In the following, we restrict to distributions co-generated in degree one. In this particularly simple situation, we can provide an alternative proof of Zambon-Zhu result exploiting the description of vector valued forms in terms of their Spencer data.

\begin{lemma} \label{IMf6}
Let $(A, [\![-,-]\!], \rho)$ be a Lie algebroid over a manifold $M$. If $(TM, B, \nabla)$ is an IM foliation of $A$ over $TM$, then there is a flat $A$-connection $\nabla^{A/B}$ in $A/B$ such that
\begin{equation}\label{IMf1}
\nabla_X^{A/B} (Y \modu B) = \nabla_{\rho (Y)} (X \modu B) - [\![ Y,X]\!] \modu B,
\end{equation}
and, moreover,
\begin{equation} \label{IMf3}
d_\nabla \left( [\![X,Y]\!] \modu B \right) =  L_{\nabla_X^{A/B}} d_\nabla \left( Y \modu B \right) - L_{\nabla_Y^{A/B}} d_\nabla \left( X \modu B \right).
\end{equation}
for all $X,Y \in \Gamma (A)$.
  Conversely, if $B \subset A$ is a vector subbundle, $\nabla$ is a flat connection in $A/B$, and $\nabla^{A/B}$ is a flat $A$-connection in $A/B$ satisfying (\ref{IMf1}) and (\ref{IMf3}), then $(TM, B, \nabla)$ is an IM foliation of $A$ over $TM$.  
\end{lemma}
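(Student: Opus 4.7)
The plan is to take (\ref{IMf1}) as the defining formula for $\nabla^{A/B}$ in the forward direction, and to verify both directions by direct computation, with (\ref{IMf1}) itself serving as the bridge between them. Write $\bar X := X \modu B$ for brevity below.

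\emph{Forward direction.} First I would check that the right-hand side of (\ref{IMf1}) depends only on $\bar Y$, not on the lift $Y$. Writing $X$ locally in a $\nabla$-flat frame of $A/B$ as $X = f^i X_i$ with each $\overline{X_i}$ $\nabla$-flat, the ideal part of axiom (\ref{IMf2}) yields $[\![Y', X_i]\!] \in \Gamma(B)$ for any $Y' \in \Gamma(B)$, and a one-line Leibniz computation then matches $\overline{[\![Y', X]\!]}$ with $\nabla_{\rho(Y')} \bar X$, as required. Once well-definedness is in place, the connection axioms ($C^\infty(M)$-linearity in $X$, and the Leibniz rule in $\bar Y$ with symbol $\rho(X)$) follow immediately from the corresponding properties of $[\![-,-]\!]$ and $\nabla$. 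Flatness of $\nabla^{A/B}$ and identity (\ref{IMf3}) are the two substantive computations: expanding $[\nabla^{A/B}_X, \nabla^{A/B}_Y]\bar Z = \nabla^{A/B}_{[\![X, Y]\!]}\bar Z$ using (\ref{IMf1}), the Jacobi identity for $[\![-,-]\!]$, flatness of $\nabla$, and the subalgebra part of (\ref{IMf2}) (tested on a local $\nabla$-flat frame of $A/B$) reduces everything to a single combinatorial identity on bracket terms; the general case follows by $C^\infty(M)$-bilinearity. Identity (\ref{IMf3}) is handled the same way, by evaluating both sides on flat generators of $\Omega^\bullet(M, A/B)$.

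\emph{Reverse direction.} This is cleaner and proceeds by three successive applications of (\ref{IMf1}). If $X, Y \in \Gamma(B)$ then $\bar X = \bar Y = 0$, so (\ref{IMf1}) forces $\overline{[\![Y, X]\!]} = 0$, i.e.\ $B$ is a Lie subalgebroid. If $Y \in \Gamma(B)$ and $\bar X$ is $\nabla$-flat, then $\nabla_{\rho(Y)} \bar X = 0$, and (\ref{IMf1}) again forces $\overline{[\![Y, X]\!]} = 0$, giving the ideal property required by (\ref{IMf2}). Finally, if both $\bar X, \bar Y$ are $\nabla$-flat, (\ref{IMf1}) gives $\overline{[\![X, Y]\!]} = \nabla^{A/B}_X \bar Y$; applying (\ref{IMf3}) with $d_\nabla \bar X = d_\nabla \bar Y = 0$ then yields $d_\nabla \overline{[\![X, Y]\!]} = 0$, which is the subalgebra property. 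The two anchor conditions in the definition of IM foliation are automatic since $F = TM$.

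The main obstacle will be the forward direction's verification of the flatness of $\nabla^{A/B}$ and of identity (\ref{IMf3}): both demand a careful expansion of all terms with simultaneous appeal to the Jacobi identity for $[\![-,-]\!]$, flatness of $\nabla$, and the subalgebra/ideal parts of axiom (\ref{IMf2}); the reduction to a local $\nabla$-flat frame of $A/B$ is essential to keep the bookkeeping under control.
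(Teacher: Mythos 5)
Your proposal is correct and follows essentially the same route as the paper: define $\nabla^{A/B}$ by (\ref{IMf1}), verify well-definedness and the remaining identities by reducing, via $C^\infty(M)$-(bi)linearity, to local frames whose classes mod $B$ are $\nabla$-flat, where the ideal/subalgebra parts of the IM-foliation axioms and the Jacobi identity finish the job; the converse is obtained exactly as you describe, by specializing (\ref{IMf1}) and (\ref{IMf3}) to sections of $B$ and to flat classes. Your write-up of the converse is in fact more explicit than the paper's, which dismisses it as immediate.
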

\begin{proof}
For the first part of the statement, let $(TM,B, \nabla)$ be an IM foliation as in the statement. Denote by $\Gamma_\nabla$ the sheaf on $M$ consisting of sections $X$ of $A$ such that $X \modu B$ is $\nabla$-flat. Since $\Gamma (A/B)$ is locally generated by flat sections, $\Gamma (A)$ is locally generated by $\Gamma_\nabla$. Now, the left hand side of (\ref{IMf1}) is clearly $C^\infty (M)$-linear in $X$. Moreover, it vanishes whenever $Y \in \Gamma (B)$. To see this, it is enough to compute on local generators $X \in \Gamma_\nabla$. In this case, the left hand side of (\ref{IMf1}) reduces to $-[\![ Y, X ]\!] \modu B$ which vanishes by property (\ref{IMf2}) of IM foliations whenever $Y \in \Gamma (B)$. One concludes that (\ref{IMf1}) defines a differential operator $\nabla_X^{A/B}$ in $\Gamma (A/B)$ for all $X \in \Gamma (A)$. It is easy to see that, besides being $C^\infty (M)$-linear in $X$, $\nabla_X^{A/B}$ is actually a derivation with symbol $\rho(X)$. Thus $\nabla^{A/B}$ is a well-defined $A$-connection in $A/B$. To see that it is flat, check that the curvature
\[
R(X,Y) (Z \modu B) := \left( [\nabla_X^{A/B}, \nabla_Y^{A/B}] - \nabla^{A/B}_{[\![ X, Y ]\!]} \right) (Z \modu B)
\]
vanishes on all $X,Y, Z$. Since $R$ is linear in the first two arguments, it is enough to check that it vanishes on $X,Y \in \Gamma_\nabla$. In this case $[\![ X,Y]\!] \in \Gamma_\nabla$ as well and
\[
R(X,Y) (Z \modu B) = [\![ [\![ Z,Y]\!] , X ]\!] -  [\![ [\![ Z,X]\!] , Y ]\!] + [\![ Z, [\![X,Y]\!] ]\!] = 0
\]
by Jacobi identity.

Finally, notice that Equation (\ref{IMf3}) is equivalent to
\begin{equation}\label{IMf4}
\begin{aligned}
& \nabla_Z \left( [\![ X,Y ]\!] \modu B \right) \\
& = \left( \nabla_X^{A/B} \nabla_Z - \nabla_{[\rho(X), Z]}\right) (Y \modu B) - \left( \nabla_Y^{A/B} \nabla_Z - \nabla_{[\rho(Y), Z]}\right) (X \modu B),
\end{aligned}
\end{equation}
$X,Y\in \Gamma (A)$, and $Z \in \mathfrak{X}(M)$. Actually, (\ref{IMf4}) can be easily obtained from (\ref{IMf3}), by inserting $Z$ in both sides, and using 
\[
[i_Z, L_{\nabla_X^{A/B}}] = i_{[Z, \rho (X)]}.
\]
Thus it is enough to check that the expression
\begin{align*}
& S(X,Y; Z) :=
\nabla_Z \left( [\![ X,Y ]\!] \modu B \right) \\
& - \left( \nabla_X^{A/B} \nabla_Z - \nabla_{[\rho(X), Z]}\right) (Y \modu B) + \left( \nabla_Y^{A/B} \nabla_Z - \nabla_{[\rho(Y), Z]}\right) (X \modu B)
\end{align*}
vanishes for all $X,Y,Z$. A direct computation shows that $S$ is $C^\infty (M)$-linear in its first two arguments. Therefore, it is enough to compute $S(X,Y; Z)$ for $X,Y \in \Gamma_\nabla$. In this case $[\![ X,Y]\!] \in \Gamma_\nabla$ as well and $S(X,Y;Z)$ vanishes.

The second part of the statement immediately follows from (\ref{IMf1}) and (\ref{IMf3}).  
\end{proof}

\begin{theorem}[Zambon \& Zhu \cite{zz12}]\label{TheoremLCS}
There is a one-to-one correspondence between degree one $\N Q$-manifolds equipped with a compatible involutive distribution, co-generated in degree one, and Lie algebroids $A \rightarrow M$ equipped with an IM foliation over $TM$.
\end{theorem}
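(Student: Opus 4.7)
The plan is to translate the graded data into non-graded Spencer data via Theorem \ref{Theorem1} and Proposition \ref{prop:comp}, and then match the result with the algebraic description of IM foliations provided by Lemma \ref{IMf6}.

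Write $\gM = A[1]$, so that $Q$ encodes a Lie algebroid structure $(\rho, [\![-,-]\!])$ on $A \to M$. A distribution $\mathcal{D}$ co-generated in degree one is the same as a surjective degree one $1$-form $\theta_{\mathcal{D}}$ on $\gM$ with values in a non-graded vector bundle $E \to M$, with $T\gM/\mathcal{D} \cong \gM \times_M E[1]$. By Theorem \ref{Theorem1}, $\theta_{\mathcal{D}}$ is equivalent to Spencer data $(D, \ell)$, with $\ell \colon A \to E$ a bundle map and $D \colon \Gamma(A) \to \Omega^1(M, E)$ a first-order differential operator whose symbol, up to sign, is $\ell$; the compatibility conditions (\ref{Eq2})--(\ref{Eq4}) are automatic since every term they involve lives in a total degree $\leq -1$ that vanishes in $A[1]$. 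Surjectivity of $\theta_{\mathcal{D}}$ is equivalent to surjectivity of $\ell$, so $B := \ker \ell$ is a subbundle of $A$ and $E \cong A/B$ with $\ell$ the canonical projection.

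Next, $Q$-compatibility of $\mathcal{D}$ produces, via (\ref{eq:L_Qtheta}), a homological derivation $\mathbb{Q}$ of $\gE = \gM \times_M E$ with symbol $Q$ satisfying $L_{\mathbb{Q}}\theta_{\mathcal{D}} = 0$, equivalently a flat $A$-representation on $E = A/B$ which we will show is the $A$-connection $\nabla^{A/B}$ appearing in Lemma \ref{IMf6}. Applying Proposition \ref{prop:comp} with $\omega = \theta_{\mathcal{D}}$ and $X, Y \in \mathfrak{X}_{-1}(\gM) = \Gamma(A)$: condition (\ref{C3}) is vacuous (as $\theta_{\mathcal{D}}$ is a $1$-form), while (\ref{C1}) and (\ref{C2}) -- after substituting $[[Q, X], Y] = [\![X, Y]\!]$ and using that degree $-1$ vector fields annihilate degree zero sections and $1$-forms on $M$ -- become, once a $TM$-connection $\nabla$ on $A/B$ is defined by the formula $\nabla_v(X \modu B) := -i_v D(X)$, exactly the identities (\ref{IMf3}) and (\ref{IMf1}) of Lemma \ref{IMf6}.

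The remaining ingredient -- well-definedness and flatness of $\nabla$, and the fact that $B$ is a Lie subalgebroid -- comes from involutivity of $\mathcal{D}$. Because brackets of two degree $-1$ vector fields vanish by degree reasons in $A[1]$, the non-trivial content of involutivity lies in the mixed bracket $[\mathfrak{X}_0 \cap \mathcal{D}, \mathfrak{X}_{-1} \cap \mathcal{D}] \subset \mathcal{D}$ and in $[\mathfrak{X}_0 \cap \mathcal{D}, \mathfrak{X}_0 \cap \mathcal{D}] \subset \mathcal{D}$. Elements of $\mathfrak{X}_0 \cap \mathcal{D}$ are horizontal lifts $\tilde v$ of vector fields $v$ on $M$, with the lift dictated by $\theta_{\mathcal{D}}(\tilde v) = 0$; a direct computation of $[\tilde v, Y]$ for $Y \in \Gamma(B)$ shows that the first condition simultaneously forces $\Gamma(B)$ to be closed under $[\![-,-]\!]$ and makes $\nabla$ independent of the chosen lift, while the second condition yields the flatness $[\nabla_v, \nabla_w] = \nabla_{[v,w]}$. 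Once $(B, \nabla, \nabla^{A/B})$ is in place, Lemma \ref{IMf6} produces the IM foliation $(TM, B, \nabla)$ of $A$ over $TM$. The converse reverses each step, assembling $(D, \ell)$ and $\mathbb{Q}$ from the IM-foliation data via Theorem \ref{Theorem1} and checking involutivity and $Q$-compatibility of the resulting $\mathcal{D}$. The main obstacle is the sign-bookkeeping inherent to Spencer data: because sections of $A$ are odd as degree $-1$ vector fields on $\gM$, the symbol of $D$ differs in sign from that of $d_\nabla \circ \ell$, so the precise dictionary between $(D, \ell)$ and the connection $\nabla$, and between (\ref{C1})--(\ref{C2}) and (\ref{IMf1})--(\ref{IMf3}), must be unravelled carefully, but once these conventions are fixed the correspondence follows cleanly.
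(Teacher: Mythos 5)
Your proposal follows essentially the same route as the paper: encode $\theta_{\mathcal D}$ by its Spencer data $(D,\ell)$, observe that $(\mathrm{Eq.}~\ref{Eq2})$--$(\ref{Eq4})$ are vacuous in degree one so that an involutive $\mathcal D$ amounts to the pair $(B=\ker\ell,\nabla)$ with $\nabla_v(X\,\mathrm{mod}\,B)=-i_vD(X)$, then feed $\theta_{\mathcal D}$ into Proposition \ref{prop:comp} to turn $Q$-compatibility into the identities (\ref{IMf1}) and (\ref{IMf3}), and finally invoke Lemma \ref{IMf6}. This is exactly the paper's argument, and the outline is correct.

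One claim in your third paragraph is wrong, although it turns out not to be load-bearing: you assert that involutivity of $\mathcal D$ --- via the mixed bracket $[\tilde v,Y]$ with $Y\in\Gamma(B)$ --- forces $\Gamma(B)$ to be closed under $[\![-,-]\!]$. It cannot: $[\tilde v,Y]$ is the graded commutator of vector fields on $A[1]$, whereas $[\![X,Y]\!]=[[Q,X],Y]$ depends on $Q$, and involutivity of $\mathcal D$ is a condition on $\mathcal D$ alone, independent of any homological vector field. What the mixed bracket actually gives you is $i_{\tilde v}D(Y)=i_{[Y,\tilde v]}\theta_{\mathcal D}=0$, i.e.\ $\Gamma(B)\subset\ker D$, which is what makes $\nabla$ well defined on $A/B$ (this is how the paper uses it, via Remark \ref{rem:symmetries}). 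The subalgebroid property of $B$ is part of the IM-foliation axioms and is recovered only at the last step, from (\ref{IMf1}) and (\ref{IMf3}) through the converse half of Lemma \ref{IMf6} --- i.e.\ it comes from the compatibility with $Q$, not from involutivity. Since you do conclude via Lemma \ref{IMf6}, your proof still closes; just delete the claim that involutivity yields the subalgebroid property.
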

\begin{proof}
Let $\gM = A[1]$ be a degree one $\N$-manifold, and let $\mathcal{D}$ be an involutive distribution on it, co-generated in degree one. Denote by $\pi: \gM \rightarrow M$ the projection of $\gM$ onto its zero dimensional shadow. The quotient bundle $T\gM / \mathcal{D} $ identifies with $\gM \times_M E[1]$ for a non-graded vector bundle $E \rightarrow M$, and $\theta_{\mathcal{D}}$ is a degree one $E$-valued $1$-form on $\gM$. Moreover, $\mathcal{D}$ projects surjectively onto $TM$, i.e.~$\pi_\ast \mathcal{D} = TM$. In particular, for any vector field $Z$ on $M$ there is a (degree zero) vector field $\tilde{Z} \in \Gamma (\mathcal{D})$ that is $\pi$-related to $Z$. 

Denote by $(D, \ell)$ the Spencer data of $\theta_{\mathcal{D}}$. In particular, $\ell : \Gamma (A) \rightarrow \Gamma (E)$ is surjective. Let $B = \ker \ell$ so that $E$ identifies with $A/B$ and $\ell$ identifies with the projection $\Gamma (A) \rightarrow \Gamma (A/B)$. In the following we will understand this isomorphism. There is a unique first order differential operator $\delta : \Gamma(A/B) \rightarrow \Omega^1 (M, A/B)$ such that diagram
\[
\xymatrix{\Gamma (A) \ar[r]^-D \ar[d] & \Omega^1 (M , A/B) \\
                 \Gamma (A/B) \ar[ur]_-{\delta} &
} 
\]
commutes. To see this it is enough to show that $\Gamma (B) \subset \ker D$. Since $\Gamma (B) = \Gamma_{-1} (\mathcal{D})$, and sections of $\mathcal{D}$ are infinitesimal symmetries by involutivity, then $D(X) = L_X \theta_{\mathcal{D}} = 0$ for all $X \in \Gamma (B)$ (see Remark \ref{rem:symmetries}). It follows from (\ref{Eq1}) that $\delta (f \alpha) =  f \delta \alpha - df \otimes \alpha$, for all $f \in C^\infty (M)$, and $\alpha \in \Gamma (A/B)$. Therefore, $\delta$ is minus the (first) de Rham differential of a unique connection $\nabla$ in $A/B$. We claim that $\nabla$ is a flat connection. Indeed, first of all, notice that for all $Z \in \mathfrak{X} (M)$ and $X \in \Gamma (A)$,
\[
\nabla_Z (X \modu B) = i_Z d_\nabla (X \modu B) = - i_Z D(X) = - i_{\tilde{Z}} L_X \theta_\mathcal{D}, 
\]
where $\tilde{Z}$ is any degree zero vector field on $\gM$ that is $\pi$-related to $Z$. We can choose $\tilde{Z} \in \Gamma(\mathcal{D})$ so that
\[
\nabla_Z (X \modu B) = -i_{\tilde{Z}} L_X \theta_\mathcal{D} = -i_{[X,\tilde{Z}]} \theta_\mathcal{D} = [\tilde{Z}, X] \modu B.
\]
Now, let $Y,Z$ be vector fields on $M$, and let $\tilde{Y}, \tilde{Z}$ be vector fields in $\mathcal{D}$ that are $\pi$-related to them. Then, by involutivity, $[\tilde{Y}, \tilde{Z}]$ is in $\mathcal{D}$ and it is $\pi$-related to $[Y,Z]$. Thus
\[
\begin{aligned}
\nabla_{[Y,Z]} (X \modu B) & = [[\tilde{Y},\tilde{Z}], X] \modu B \\
                                          & = \left( [\tilde{Y}, [\tilde{Z}, X]] - [\tilde{Z}, [\tilde{Y}, X]] \right) \modu B \\
                                          & = [\nabla_Y, \nabla_Z] (X \modu B).
\end{aligned}
\]
Conversely, let $B \subset A$ be a vector subbundle and let $\nabla$ be a flat connection in $A/B$. Denote by $\ell : A \rightarrow A/B$ the projection. Then $(-d_\nabla \circ \ell, \ell)$ are Spencer data for an $A/B$-valued $1$-form $\theta$ on $\gM $. Put $\mathcal{D} = \ker \theta$. To see that $\mathcal{D}$ is involutive, notice that $\Gamma_{-1} (\mathcal{D}) = \ker \ell = \Gamma (B)$. Moreover, $\mathcal{D}$ projects surjectively on $TM$, therefore $\Gamma (\mathcal{D})$ is generated by 1) sections of $B$ and 2) degree zero vector fields in $\mathcal{D}$ that are projectable onto $M$. Commuting the latter with the former, one gets sections of $B$ which are again in $\mathcal{D}$. It remains to show that the commutator of two projectable vector fields $\tilde{Z}, \tilde{Y}$ in $\mathcal{D}$ is again in $\mathcal{D}$, i.e.~$i_{[\tilde{Y}, \tilde{Z}]} \theta = 0$. Now $i_{[\tilde{Y}, \tilde{Z}]} \theta = 0$ iff $L_X i_{[\tilde{Y}, \tilde{Z}]} \theta = 0$ for all $X \in \mathfrak{X}_{-}(\gM) = \Gamma (A)$. The same computation as above shows that
\[
L_X i_{[\tilde{Y}, \tilde{Z}]} \theta = \left( [\nabla_Y, \nabla_Z] - \nabla_{[Y,Z]} \right) (X \modu B) = 0.
\]
where $Y= \pi_\ast \tilde{Y}$, and $Z = \pi_\ast \tilde{Z}$. We conclude that involutive distributions $\mathcal D$ on $\gM$ co-generated in degree one are equivalent to the following (non-graded) data: a vector subbundles $B \subset A$ and a flat connection in $A/B$. 

Finally, let $\mathcal D$ be an involutive distribution on $\gM$ co-generated in degree one and let $(B, \nabla)$ be the corresponding non-graded data. Moreover, let $\mathbb{Q}$ be an homological derivation of $T\gM/ \mathcal{D} = \gM \times_M A/B$, let $Q$ be its symbol, and let $(A, [\![-,-]\!], \rho)$ and $(A/B , \nabla^{A/B})$ be the Lie algebroid and the Lie algebroid representation corresponding to $\mathbb{Q}$. Distribution $\mathcal{D}$ is compatible with $Q$, and $\mathbb{Q}$ is induced by $[Q,-]$, iff   $\theta_\mathcal{D}$ is compatible with $\mathbb Q$. To see when is this the case, we use again Proposition \ref{prop:comp}. Identity (\ref{C3}) is automatically satisfied by $\omega = \theta_{\mathcal D}$. Additionally, for $\omega = \theta_{\mathcal D}$ and $X,Y \in \mathfrak{X}_{-} (\gM) = \Gamma (A)$, one gets
\begin{align*}
A(X,Y) = {} & D ([[Q, X], Y]) - L_{[\mathbb Q, X]} D (Y) + L_{[\mathbb Q, Y]} D (X)\\
 ={} & - d_\nabla \left( [\![X,Y]\!] \modu B \right) +  L_{\nabla_X^{A/B}} d_\nabla \left( Y \modu B \right)  - L_{\nabla_Y^{A/B}} d_\nabla \left( X \modu B \right) ,
\end{align*}
and
\begin{align*}
B(X,Y) = {} & \ell([[Q, X], Y])  + i_{[Q, X]} D(Y) + L_{[\mathbb Q, Y]} \ell (X) \\
 ={} & [\![ X,Y]\!] \modu B - \nabla_{\rho(X)} (Y \modu B) + \nabla_Y^{A/B} (X \modu B),
\end{align*}
  where we used that $D = -d_\nabla \circ \ell$. Proposition \ref{prop:comp} and Lemma \ref{IMf6} then show that $\mathcal{D}$ is compatible with $Q$, and $\mathbb{Q}$ is induced by $[Q, -]$, iff $(B, \nabla, TM)$ is an IM foliation of $A$ over $TM$, and $\nabla^{A/B}$ is the $A$-connection in the statement of the lemma. This concludes the proof.
\end{proof}

\section{Vector Valued $2$-forms on $\N Q$-manifolds}\label{Sec2}
\subsection{Degree one symplectic $\N Q$-manifolds}

Recall that a degree $n$ symplectic $\N$-manifold is an $\N$-manifold $\gM$ equipped with a degree $n$ symplectic structure, i.e.~a degree $n$ non-degenerate, closed $2$-form $\omega$. 

It immediately follows from the definition that, if $(\gM, \omega)$ is a degree $n$ symplectic $\N$-manifold, then the degree of $\gM$ is at most $n$. If $n > 0$, then $\omega = d \vartheta$, with $\vartheta = n^{-1} i_\Delta \omega$.

\begin{example}
The degree $n$ $2$-form $\omega$ on $T^\ast[n] M$ (see Example \ref{Examp1}) is a degree $n$ symplectic structure.
\end{example}

A degree $n$ symplectic $\N Q$-manifold is a degree $n$ symplectic manifold $(\gM, \omega)$ equipped with an homological vector field $Q$ such that $L_Q \omega = 0$.

\begin{theorem}[Roytenberg \cite{r02}]\label{Roy}
Every degree one symplectic $\N$-manifold $(\gM, \omega)$ is of the kind $(T^\ast [1] M, d\vartheta)$, up symplectomorpisms, where $\vartheta$ is the tautological degree one $1$-form on $T^\ast[1] M$ (see Example \ref{Examp1}). Moreover, there is a one-to-one correspondence between degree one symplectic $\N Q$-manifolds and (non-graded) Poisson manifolds.
\end{theorem}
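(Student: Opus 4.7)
The plan is to follow the strategy of the proofs of Theorem \ref{Gra} and Theorem \ref{TheoremLCS}: extract the Spencer data of $\omega$ and apply Proposition \ref{prop:comp} to a compatible homological vector field. Write $\gM = A[1]$ for a non-graded vector bundle $A \to M$. Since $\omega$ is ordinary, i.e., takes values in the trivial line bundle (so $E = \mathbb{R}$), Theorem \ref{Theorem1} gives Spencer data consisting of a $C^\infty(M)$-linear map $\ell : \Gamma(A) \to \Omega^1(M)$ and a first order differential operator $D : \Gamma(A) \to \Omega^2(M)$.

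For the first assertion, the key observation is that both closedness and non-degeneracy of $\omega$ translate into simple conditions on $(D,\ell)$. Applying Example \ref{Examp1bis} to $\omega$, the Spencer data of $d\omega$ read $\ell'(X) = D(X) + d\ell(X)$ and $D'(X) = dD(X)$; vanishing of $\ell'$ (forced by $d\omega = 0$) yields $D(X) = -d\ell(X)$, so $\omega$ is determined by $\ell$ alone. Restricting non-degeneracy to degree $-1$, where $i_X\omega = \ell(X)$ for $X \in \Gamma(A)$, forces $\ell : A \to T^\ast M$ to be a $C^\infty(M)$-linear vector bundle isomorphism (injectivity is immediate; surjectivity follows from the rank count that full non-degeneracy imposes on $\gM$). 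Comparing with the Spencer data of $d\vartheta$ on $T^\ast[1]M$ from Example \ref{Examp1}, this isomorphism lifts to a graded diffeomorphism $\Phi : \gM \to T^\ast[1]M$; since positive degree forms are uniquely determined by their Spencer data (the uniqueness clause of Theorem \ref{Theorem1}), one has $\Phi^{\ast}(d\vartheta) = \omega$.

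For the correspondence with Poisson manifolds, I work on $(T^\ast[1]M, d\vartheta)$ and let $Q$ be a degree one homological vector field with $L_Q\omega = 0$. Cartan's identity together with $d\omega = 0$ gives $d(i_Q\omega) = L_Q\omega = 0$, so $i_Q\omega$ is a $d$-closed degree $2$ one-form; by Remark \ref{rem:Delta_E} it is $d$-exact, hence $i_Q\omega = dH$ for a unique $H \in C^\infty_2(\gM) = \Gamma(\wedge^2 TM)$, a bivector on $M$. Non-degeneracy of $\omega$ uniquely determines $Q$ as the vector field $X_H$ characterised by $i_{X_H}\omega = dH$, which with respect to the degree $-1$ Poisson bracket $\{-,-\}$ induced by $\omega$ on $C^\infty(\gM) = \Gamma(\wedge^{\bullet} TM)$ (the Schouten--Nijenhuis bracket of multivector fields) is nothing but the Hamiltonian derivation $\{H, -\}$. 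Then $[Q,Q] = 0$ is equivalent, through graded Jacobi and the non-degeneracy of $\{-,-\}$, to $[H,H]_{\mathrm{SN}} = 0$, i.e., to $H$ being a Poisson bivector on $M$.

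The main technical step I expect to require care is the passage from Spencer data to the full form in the construction of $\Phi$, but this is automatic from the uniqueness clause of Theorem \ref{Theorem1}, so there is no real obstacle once $\ell$ has been matched. As an alternative derivation of the Poisson condition that bypasses the Hamiltonian formalism, Proposition \ref{prop:comp} can be applied directly: identity (\ref{C3}) reduces to the skew-symmetry of the anchor $\rho : T^\ast M \to TM$, forcing $\rho = \pi^{\sharp}$ for a bivector $\pi \in \Gamma(\wedge^2 TM)$, while (\ref{C1}) and (\ref{C2}) translate into the Leibniz and Jacobi identities for the resulting Koszul-type bracket on $\Omega^1(M)$, jointly equivalent to $[\pi,\pi]_{\mathrm{SN}} = 0$.
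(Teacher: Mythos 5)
Your proof is correct, and its first half coincides with the paper's: closedness gives $D = -d \circ \ell$ (the commuting diagram (\ref{eq:diag_closed})), non-degeneracy makes $\ell : \Gamma(A) \to \Omega^1(M)$ an isomorphism, and the uniqueness clause of Theorem \ref{Theorem1} upgrades the resulting diffeomorphism $A[1] \simeq T^\ast[1]M$ to a symplectomorphism. For the correspondence with Poisson manifolds, however, your primary route --- writing $i_Q\omega = dH$ for a degree two Hamiltonian $H \in \Gamma(\wedge^2 TM)$ and reducing $[Q,Q]=0$ to $[H,H]_{\mathrm{SN}}=0$ via the induced degree $-1$ Poisson (Schouten) bracket --- is essentially Roytenberg's original argument, which the paper explicitly sets out to avoid (it ``exploits explicitly the Poisson bracket''). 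The paper instead runs everything through Proposition \ref{prop:comp}: it observes that (\ref{C1}) is a consequence of (\ref{C2}) and (\ref{C3}) because $\omega$ is closed, restricts to $X,Y$ of the form $df, dg$ (using $\ell(df)=df$ and $D(df)=0$ from Example \ref{Examp1}), and reads off exactly the conditions (\ref{Eq16}) characterizing cotangent Lie algebroids of Poisson manifolds. Your secondary ``alternative derivation'' is precisely this route, stated more loosely. What the Hamiltonian argument buys is speed and a direct identification of the bivector; what the Spencer-data argument buys --- and the reason the paper insists on it --- is that it survives verbatim when non-degeneracy is dropped (Theorem \ref{Theor:presymp} and Corollary \ref{Cor:presymp}) or the form degree is raised (Theorems \ref{TheorSO} and \ref{TheorMS}), where no Hamiltonian or Poisson-bracket formalism is available.
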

Roytenberg proof \cite{r02} exploits explicitly the Poisson bracket. We propose an alternative proof focusing on Spencer data. The advantage is that we can apply the same strategy to degenerate (Theorem \ref{Theor:presymp} and Corollary \ref{Cor:presymp}) or higher degree forms (Theorems \ref{TheorSO} and \ref{TheorMS}) in a straightforward way. 
\begin{proof}
Let $(\gM, \omega)$ be a degree one symplectic $\N$-manifold, and let $(D, \ell)$ be the Spencer data of $\omega$. In particular, $\gM = A[1]$ for some vector bundle $A \rightarrow M$. By non-degeneracy $\ell : \mathfrak{X}_{-1} (\gM) \rightarrow \Omega^1 (\gM)$ is an isomorphism $\Gamma (A) \simeq \Omega^1 (M)$, i.e.~$\gM = A[1] \simeq T^\ast [1] M$. Moreover, since $\omega$ is closed, diagram
\begin{equation}\label{eq:diag_closed}
\xymatrix{\Gamma (A) \ar[r]^-D \ar[d]_-{\ell} & \Omega^2 (M) \\
                 \Omega^1 (M) \ar[ur]_-{-d} &
} 
\end{equation}
commutes. This shows that diffeomorphism $\gM \simeq T^\ast[1] M$ identifies $\omega$ with the canonical symplectic structure on $T^\ast [1] M$ (see Example \ref{Examp1}), thus proving the first part of the statement. In the following we identify $\gM$ and $T^\ast[1] M$. For the second part of the statement, let $Q$ be an homological vector field on $\gM$ and let $(T^\ast M, \rho, [\![-,-]\!])$ be the corresponding Lie algebroid. Similarly as in previous section, $(\gM, \omega, Q)$ is a symplectic $\N Q$-manifold iff   it satisfies (\ref{C2}), and (\ref{C3}), 
for all $X,Y \in \mathfrak{X}_{-1} (\gM) \simeq \Omega^1 (M)$. Indeed, since $\omega$ is closed, then condition (\ref{C1}) is actually a consequence of (\ref{C2}), and (\ref{C3}). It is easy to see that one can even restrict to $X,Y$ in the form $df,dg$, with $f,g \in C^\infty (M)$. In this case, one gets
 
\begin{align*}
B(X,Y) = \ell ([[Q,df], dg]) + L_{[Q,df]} \ell (dg)= [\![df,dg]\!] +L_{\rho (df)} dg = [\![df,dg]\!] +d \rho (df) (g),
\end{align*}
and
\begin{align*}
C(X,Y) = i_{[Q,df]} \ell (dg) + i_{[Q,dg]} \ell (df) = \rho (df) (g) + \rho (dg) (f),
\end{align*}
where we used that $\ell (df) = i_{df} \omega = df$, and $D(df) = L_{df} \omega = 0$   (see Example \ref{Examp1}). Concluding, $(\gM, \omega, Q)$ is a symplectic $\N Q$-manifold iff $\rho (df) (g) + \rho(dg) (f) = 0$ and $[\![dg,df]\!] = - d \rho (df) (g) = d \rho (dg) (f)$, i.e.~iff $(T^\ast M, \rho, [\![-,-]\!])$ is the Lie algebroid associated to a Poisson structure on $M$ (see Appendix \ref{ApLA}).
\end{proof}

\subsection{Degree one presymplectic $\N Q$-manifolds}

In this subsection we relax the hypothesis about non-degeneracy of the two form in the previous subsection. 

\begin{definition}
A \emph{degree $n$ presymplectic $\N$-manifold} is a degree $n$ $\N$-manifold $\gM$ equipped with a degree $n$ presymplectic structure, i.e.~a degree $n$ (possibly degenerate) closed $2$-form $\omega$. A \emph{degree $n$ presymplectic $\N Q$-manifold} is a degree $n$ presymplectic manifold $(\gM, \omega)$ equipped with an homological vector field $Q$ such that $L_Q \omega = 0$.
\end{definition}

\begin{remark}
Unlike the symplectic case, the existence of a presymplectic form on an $\N$-manifold $\gM$ doesn't bound the degree of $\gM$ anyhow. This is the reason why we added a condition on the degree of $\gM$ in the definition of a degree $n$ presymplectic $\N$-manifold above.
\end{remark}

In what follows we show that degree one presymplectic $\N Q$-manifolds (with an additional non-degeneracy condition) are basically equivalent to \emph{Dirac manifolds}. Recall that a Dirac manifold is a manifold $M$ equipped with a \emph{Dirac structure}, i.e.~a subbundle $\mathscr D \subset TM \oplus T^\ast M$ such that 1) $\mathscr D$ is maximally isotropic with respect to the canonical, split signature, symmetric form on $TM \oplus T^\ast M$
\begin{equation}\label{eq:pair}
\langle\!\langle (X, \sigma), (X', \sigma') \rangle\!\rangle = i_X \sigma' + i_{X'} \sigma,
\end{equation}
and 2) sections of $\mathscr D$ are preserved by the Dorfman (equivalently, Courant) bracket
\begin{equation}\label{eq:Dorf}
[ (X, \sigma), (X', \sigma') ]_{\mathsf{D}} := ([X,X'], L_X \sigma' - i_{X'} d \sigma),
\end{equation}
$X,X' \in \mathfrak X (M)$, $\sigma, \sigma' \in \Omega^1 (M)$. Any Dirac structure $\mathscr D$ is a Lie algebroid, with anchor given by projection $TM \oplus T^\ast M \to TM$ and bracket given by the Dorfman bracket (\ref{eq:Dorf}). Dirac manifolds encompass presymplectic and Poisson manifolds (see \cite{c90, b13} for more details). They are sometimes regarded as Lagrangian submanifolds in certain degree two symplectic $\N Q$-manifolds. Corollary \ref{Cor:presymp} below shows that they can be also regarded as suitable degree one presymplectic $\N Q$-manifolds.

Let $M$ be a manifold, denote by $\operatorname{pr}_T : TM \oplus T^\ast M \to TM$, and $\operatorname{pr}_{T^\ast} : TM \oplus T^\ast M \to T^\ast M$ the canonical projections.

\begin{theorem}\label{Theor:presymp}
There is a one-to-one correspondence between degree one presymplectic $\N Q$-manifolds and (non-graded) Lie algebroids $A \to M$ equipped with a vector bundle morphism $\Phi : A \to TM \oplus T^\ast M$ such that
\begin{enumerate}
\item the anchor of $A$ equals the composition $\operatorname{pr}_T \circ \Phi$,
\item the image of $\Phi$ is an isotropic subbundle with respect to (\ref{eq:pair}), and
\item $\Phi$ intertwines the Lie bracket $[\![-,-]\!]$ on $\Gamma (A)$ and bracket (\ref{eq:Dorf}) on $\Gamma (TM \oplus T^\ast M)$, i.e. $\Phi [\![X,Y]\!] = [ \Phi (X), \Phi (Y) ]_{\mathsf D}$ for all $X,Y \in \Gamma (A)$.
\end{enumerate}
\end{theorem}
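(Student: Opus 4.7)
The plan is to mirror the proof of Theorem \ref{Roy}, now without the non-degeneracy assumption, by exploiting the Spencer data $(D, \ell)$ of $\omega$ and Proposition \ref{prop:comp} in the special case $E = M \times \mathbb{R}$, $|\omega| = 1$, $k = 2$.

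Start with a degree one presymplectic $\N Q$-manifold $(\gM, \omega, Q)$. As usual, $\gM = A[1]$ for some vector bundle $A \to M$, and $Q$ endows $A$ with a Lie algebroid structure $(\rho, [\![-,-]\!])$. Let $(D, \ell)$ be the Spencer data of $\omega$. Closedness of $\omega$, together with Example \ref{Examp1bis}, forces $D = -d \circ \ell$, exactly as in diagram (\ref{eq:diag_closed}), so that all the relevant information is carried by the $C^\infty (M)$-linear map $\ell \colon \Gamma (A) \to \Omega^1 (M)$. Define $\Phi \colon A \to TM \oplus T^\ast M$ by $\Phi(X) := (\rho(X), \ell(X))$; then condition (1) of the theorem holds by construction.

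Conditions (2) and (3) will come from expanding $L_\mathbb{Q} \omega = 0$ via Proposition \ref{prop:comp}. As in Theorem \ref{Roy}, closedness of $\omega$ makes (\ref{C1}) a consequence of (\ref{C2}) and (\ref{C3}). Evaluating on $X, Y \in \mathfrak{X}_{-1}(\gM) \simeq \Gamma (A)$, all terms containing $L_\mathbb{Q}$ in (\ref{C2})--(\ref{C3}) vanish for degree reasons, and $[Q, X]$ projects to $\rho(X)$ on $M$. Thus (\ref{C3}) collapses to $i_{\rho(X)} \ell(Y) + i_{\rho(Y)} \ell(X) = 0$, which is exactly the isotropy of $\operatorname{im} \Phi$ with respect to the pairing (\ref{eq:pair}). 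Using $D(Y) = -d\ell(Y)$, condition (\ref{C2}) becomes
\begin{equation*}
\ell([\![X,Y]\!]) = i_{\rho(X)} d\ell(Y) - L_{\rho(Y)} \ell(X);
\end{equation*}
applying the Cartan magic formula together with the isotropy identity already secured, the right-hand side rewrites as $L_{\rho(X)} \ell(Y) - i_{\rho(Y)} d\ell(X)$, which is precisely the $T^\ast M$-component of $[\Phi(X), \Phi(Y)]_{\mathsf{D}}$. Combined with $\rho$ being a Lie algebra morphism, this yields condition (3).

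For the converse, start with $(A, \rho, [\![-,-]\!])$ and $\Phi = (\rho, \ell)$ satisfying (1)--(3). Set $D := -d \circ \ell$, and verify that $(D, \ell)$ satisfies the Spencer axioms (\ref{Eq1})--(\ref{Eq4}); the only non-trivial one is (\ref{Eq1}), which follows from the Leibniz rule for $d$, while (\ref{Eq2})--(\ref{Eq4}) hold for degree reasons. Theorem \ref{Theorem1} then produces a unique degree one $\omega \in \Omega^2 (\gM)$, closed by construction, and reversing the computation shows that (2) and (3) yield (\ref{C2}) and (\ref{C3}), hence also (\ref{C1}) by closedness, so $L_\mathbb{Q} \omega = 0$. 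The main obstacle I anticipate is the sign bookkeeping required to show that (\ref{C2}), modulo isotropy and closedness, is equivalent to the $T^\ast M$-component of the Dorfman bracket; once this is verified, the argument is parallel to that of Theorem \ref{Roy}.
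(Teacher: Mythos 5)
Your proposal is correct and follows essentially the same route as the paper's own proof: extract the Spencer data $(D,\ell)$, use closedness to reduce to $D = -d\circ\ell$, set $\Phi = (\rho,\ell)$, and translate conditions (\ref{C2}) and (\ref{C3}) of Proposition \ref{prop:comp} into the Dorfman-bracket compatibility (via Cartan's magic formula modulo the isotropy identity) and the isotropy of $\operatorname{im}\Phi$, with the converse obtained by reversing the construction. The sign bookkeeping you flag works out exactly as you anticipate, since the discrepancy between $B(X,Y)$ and $\operatorname{pr}_{T^\ast}\left(\Phi[\![X,Y]\!]-[\Phi(X),\Phi(Y)]_{\mathsf D}\right)$ is $d\bigl(i_{\rho(X)}\ell(Y)+i_{\rho(Y)}\ell(X)\bigr)$, which vanishes by (\ref{C3}).
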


\begin{proof}
Let $(\gM, \omega)$ be a degree one presymplectic $\N$-manifold, and let $(D, \ell)$ be the corresponding Spencer data. Moreover, let $Q$ be an homological vector field on $\gM$, and let $(A, \rho, [\![-,-]\!])$ be the corresponding Lie algebroid. Since $\omega$ is closed, diagram (\ref{eq:diag_closed}) commutes and $\omega$ is completely determined by $\ell$. Now, combine $\ell$ and $\rho : A \to TM$ in a vector bundle morphism $\Phi := (\rho , \ell) : A \to TM \oplus T^\ast M$. In particular, $\rho = \operatorname{pr}_T \circ \Phi$, i.e.~$\Phi$ satisfies property (1) in the statement. 
Similarly as in the proof of Theorem \ref{Roy}, $(\gM, \omega, Q)$ is a presymplectic $\N Q$-manifold iff   (\ref{C2}) and (\ref{C3}) are satisfied 
for all $X,Y \in \mathfrak{X}_{-1} (\gM) \simeq \Omega^1 (M)$. One gets
\begin{align*}
B(X,Y) & = \ell ([[Q, X], Y]) + i_{[Q,X]}D(Y) + L_{[Q,Y]} \ell (X) \\& = \ell([\![X,Y]\!]) - i_{\rho (X)} d \ell(Y) + L_{\rho(X)} \ell(X) \\
&=  \operatorname{pr}_{T^\ast} \left(\Phi [\![X,Y]\!] - [\Phi (X), \Phi (Y) ]_{\mathsf D} \right),
\end{align*}
and
\begin{align*}
C(X,Y) = i_{[Q,X]}\ell(Y) + i_{[Q,Y]}\ell (X) =  i_{\rho (X)} \ell (Y) - i_{\rho (Y)} \ell (X)  =  \langle\!\langle \Phi (X), \Phi (Y) \rangle \! \rangle .
\end{align*}
  Since $ \operatorname{pr}_{T} \left( [\Phi (X), \Phi (Y) ]_{\mathsf D} \right) = [\rho (X), \rho (Y)] = \rho [X, Y] =  \operatorname{pr}_{T} \Phi [\![X,Y]\!]$, one concludes that $(\gM, \omega, Q)$ is a presymplectic $\N Q$-manifold iff $\Phi$, besides satisfying property (1) in the statement, does also satisfy properties (2), (3).

Conversely, Let $\Phi : A \to TM \oplus T^\ast M$ be a vector bundle morphism. Put $\ell := \operatorname{pr}_{T^\ast} \circ \Phi$. It is easy to see that $(\ell,  - d \circ  \ell)$ is a pair of Spencer data corresponding to a degree one presymplectic form $\omega$ on $\gM$. If, additionally, $\Phi$ satisfies properties (1), (2), and (3) in the statement, then the same computations as above show that $(\gM, \omega, Q)$ is a presymplectic $\N Q$-manifold.
\end{proof}

\begin{corollary}\label{Cor:presymp}
There is a one-to-one correspondence between degree one presymplectic $\N Q$-manifolds $(\gM, \omega, Q)$ such that 
\begin{enumerate}
\item $\operatorname{rank} A = \dim M$, and
\item $\ker \ell \cap \ker \rho = 0$,
\end{enumerate}
(where $(A \to M, \rho, [\![-,-]\!])$ is the Lie algebroid corresponding to $(\gM, Q)$, and $(\ell, D)$ are Spencer data corresponding to $\omega$) and (non-graded) Lie algebroids $A \to M$ equipped with a Lie algebroid isomorphism $\Phi: A \simeq \mathscr D$ with values in a Dirac structure $\mathscr D \subset TM \oplus T^\ast M$ over $M$.
\end{corollary}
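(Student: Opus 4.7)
The plan is to deduce the corollary directly from Theorem \ref{Theor:presymp}. Applying that theorem, a degree one presymplectic $\N Q$-manifold $(\gM, \omega, Q)$ is the same datum as a Lie algebroid $(A \to M, \rho, [\![-,-]\!])$ together with a vector bundle morphism $\Phi = (\rho, \ell) : A \to TM \oplus T^\ast M$ satisfying properties (1)--(3) of Theorem \ref{Theor:presymp}. The content of the corollary is that the two extra hypotheses promote the image of $\Phi$ to a Dirac structure and $\Phi$ to a Lie algebroid isomorphism onto it.

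First I would translate the extra conditions. Since $\Phi = (\rho, \ell)$, the fibrewise kernel of $\Phi$ is precisely $\ker \rho \cap \ker \ell$, so condition (2) of the corollary is equivalent to fibrewise injectivity of $\Phi$. Combined with condition (1), $\operatorname{rank} A = \dim M$, this says that $\Phi$ is a vector bundle embedding with image a subbundle $\mathscr D := \Phi(A) \subset TM \oplus T^\ast M$ of constant rank $\dim M$, and that $\Phi : A \to \mathscr D$ is a vector bundle isomorphism.

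Next I would invoke the standard linear-algebra observation underlying Dirac geometry: on each fibre of $TM \oplus T^\ast M$ the pairing $\langle\!\langle -, - \rangle\!\rangle$ of (\ref{eq:pair}) is non-degenerate of split signature $(\dim M, \dim M)$, so any isotropic subspace of dimension $\dim M$ is automatically Lagrangian (maximally isotropic). Property (2) of Theorem \ref{Theor:presymp} ensures that $\mathscr D$ is isotropic, and having the correct rank, it is maximally isotropic. Property (3) of Theorem \ref{Theor:presymp} transports, via the bundle isomorphism $\Phi$, the Lie bracket on $\Gamma(A)$ onto the restriction of the Dorfman bracket (\ref{eq:Dorf}) on $\Gamma(\mathscr D)$; in particular $\Gamma(\mathscr D)$ is closed under (\ref{eq:Dorf}). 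Thus $\mathscr D$ is a Dirac structure, and $\Phi : A \to \mathscr D$ is a Lie algebroid isomorphism with the canonical Dirac-algebroid structure on the target.

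The converse is just a repackaging: given a Lie algebroid isomorphism $\Phi : A \simeq \mathscr D$ onto a Dirac structure $\mathscr D \subset TM \oplus T^\ast M$, composing with the inclusion $\mathscr D \hookrightarrow TM \oplus T^\ast M$ yields a vector bundle morphism $A \to TM \oplus T^\ast M$ which is fibrewise injective, isotropic, closed under Dorfman bracket, and whose composition with $\operatorname{pr}_T$ coincides with the anchor of $A$ (since this is how the anchor of $\mathscr D$ is defined). In particular, properties (1)--(3) of Theorem \ref{Theor:presymp} are all satisfied, so $\Phi$ produces a degree one presymplectic $\N Q$-manifold $(\gM, \omega, Q)$; the Lagrangian condition (maximally isotropic of rank $\dim M$) then forces $\operatorname{rank} A = \dim M$ and fibrewise injectivity of $\Phi$, i.e.\ conditions (1) and (2) of the corollary. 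No real obstacle is anticipated: the proof is essentially a bookkeeping exercise on top of Theorem \ref{Theor:presymp}, the only nontrivial ingredient being the linear-algebra upgrade of \emph{isotropic} to \emph{maximally isotropic} in half the ambient dimension.
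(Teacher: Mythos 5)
Your proposal is correct and follows essentially the same route as the paper: both deduce the corollary from Theorem \ref{Theor:presymp} by observing that condition (2) is exactly fibrewise injectivity of $\Phi=(\rho,\ell)$, making $\Phi$ an isomorphism onto its image, and that condition (1) upgrades the isotropic image to a maximally isotropic (hence Dirac) subbundle, since maximal isotropic subspaces of the split-signature pairing have dimension $\dim M$. The paper's own proof is just a terser version of your bookkeeping, so there is nothing to add.
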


\begin{proof}
Let $(\gM, \omega, Q)$ be a degree one presymplectic $\N Q$-manifold and let $(A, \Phi)$ be the corresponding non-graded data as in Theorem \ref{Theor:presymp}. Vector bundle morphism $\Phi$ is injective iff condition (2) in the statement is satisfied. In this case, $\Phi$ is an isomorphism onto its image $\mathscr D$. Additionally, $\mathscr D$ is maximally isotropic in $TM \oplus T^\ast M$, hence a Dirac structure, iff $\operatorname{rank} \mathscr D = \operatorname{rank} A$ is precisely $\dim M$, i.e.~condition (1) in the statement is satisfied.
\end{proof}

\subsection{Degree one locally conformal symplectic $\N Q$-manifolds}

The original definition of a locally conformal symplectic (lcs) structure is (equivalent to) the following \cite{v85}: a \emph{lcs structure} on a manifold $M$ is a pair $(\phi, \omega)$, where $\phi$ is a closed $1$-form and $\omega$ is a non-degenerate $2$-form on $M$ such that $d\omega = \phi \wedge \omega$. Lcs manifolds share some properties with ordinary symplectic manifolds but are manifestly more general. Moreover, they are examples of Jacobi manifolds. In this paper we adopt an approach to lcs manifolds which is slightly more intrinsic than the traditional one (see Appendix \ref{Ap1}) in the same spirit as the \emph{intrinsic approach} to contact and Jacobi geometry of \cite{cs13}.

\begin{definition}
A degree $n$ \emph{abstract lcs $\N$-manifold} is an $\N$-manifold $\gM$ equipped with a degree $n$ \emph{abstract lcs structure}, i.e.~a triple $(\gL, \nabla, \omega)$ where $\gL \rightarrow \gM$ is a line $\N$-bundle, $\nabla$ is a flat connection in $\gL$, and $\omega$ is degree $n$, non-degenerate, $d_\nabla$-closed, $\gL$-valued $2$-form $\omega$. 
\end{definition}

First of all, notice that $\gL$, being a line bundle, is actually generated in one single degree $-k$. Up to a shift in degree in the above definition we may (and we actually will) assume $k = 0$. In particular, $\gL = \gM \times_M L$, for some (non-graded) vector bundle $L$ on the degree zero shadow $M$ of $\gM$, and $\nabla$ is actually induced from a flat connection on $L$. Exactly as in the symplectic case \cite{r02} one shows that, if $\gM$ possesses a degree $n$ abstract lcs structure, then, by non-degeneracy, the degree of $\gM$ is at most $n$. If $n > 0$, then $\omega = d_\nabla \vartheta$, with $\vartheta = n^{-1} i_{\Delta_\gL} \omega$.

\begin{example}
Consider the degree $n$ $2$-form $\omega$ of Example \ref{Examp3}. If $E = L$ is a line bundle then, triple $(T[1] \gM \times_M L, \nabla, \omega)$ is a degree $n$ abstract lcs structure.
\end{example}

 A degree $n$ abstract lcs symplectic $\N Q$-manifold is a degree $n$ abstract lcs manifold $(\gM, \gL, \nabla, \omega)$ equipped with an homological derivation $\mathbb{Q}$ of $\gL$ such that $L_\mathbb{Q} \omega = 0$. The proposition below shows that, actually, $\mathbb{Q}$ is completely determined by its symbol.
 
 \begin{proposition}
 Let $(\gM, \gL, \nabla, \omega)$ be an abstract lcs $\N$-manifold with homological derivation $\mathbb{Q}$, and let $Q$ be the symbol of $\mathbb{Q}$. Then $\mathbb{Q}$ is the covariant derivative along $Q$.
 \end{proposition}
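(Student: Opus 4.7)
The plan is to compare $\mathbb{Q}$ with $\nabla_Q$ and show that their difference, which turns out to be a single degree-one function, must vanish because of $L_\mathbb{Q}\omega = 0$. First, since $\mathbb{Q}$ and $\nabla_Q$ are both degree-one derivations of $\gL$ with common symbol $Q$, the difference $\mathbb{Q}-\nabla_Q$ is $C^\infty(\gM)$-linear, hence a section of $\mathrm{End}\,\gL \simeq C^\infty(\gM)$ (the isomorphism being canonical because $\gL$ is a line bundle). Thus $\mathbb{Q}-\nabla_Q = f\cdot\mathrm{id}$ for a unique $f\in C^\infty_1(\gM)$, and the proposition reduces to showing $f=0$.

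Propagating to Lie derivatives on $\Omega(\gM,\gL)$, both $L_\mathbb{Q}$ and $L_{\nabla_Q}$ have symbol $L_Q$ and restrict to $\mathbb{Q}$ and $\nabla_Q$ respectively on $\Gamma(\gL)$, so their difference is multiplication by $f$, up to a sign depending on form-degree. Combining the compatibility $L_\mathbb{Q}\omega=0$ with the Cartan identity $L_{\nabla_Q}=[i_Q,d_\nabla]$ from (\ref{eq:nabla}) and the $d_\nabla$-closedness of $\omega$, I would obtain
\[
f\,\omega \;=\; \pm\, d_\nabla(i_Q\omega).
\]
Applying $d_\nabla$ to both sides, $d_\nabla^2=0$ kills the right hand side, while the Leibniz rule applied to the left, together with $d_\nabla\omega=0$, leaves
\[
df\wedge\omega=0.
\]

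The hard part is extracting $f=0$ from $df\wedge\omega=0$. Here I would use the non-degeneracy of $\omega$ together with the degree-one structure $\gM=A[1]$: picking local Darboux-style coordinates $(x^\alpha,z_\alpha)$ on $\gM$ and a local $\nabla^0$-flat trivialization $\lambda$ of $L$, one may take $\omega = dz_\alpha\wedge dx^\alpha\otimes\lambda$, and write $f = f^\alpha z_\alpha\in\Gamma(A^\ast)$. A direct expansion shows that the coefficient of the linearly independent $3$-form $(dz_\alpha)^2\wedge dx^\alpha\otimes\lambda$ in $df\wedge\omega$ is exactly $f^\alpha$, forcing $f^\alpha=0$ for each $\alpha$ and hence $f=0$. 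The crucial graded input is that $dz_\alpha$ has even total degree (form $1$ plus internal $1$), so $(dz_\alpha)^2\neq 0$; this feature has no analogue in ordinary symplectic geometry and is the main, and only non-formal, obstacle in the argument.
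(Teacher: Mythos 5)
Your proof is correct and, up to the very last step, follows the same route as the paper: there too one writes $\mathbb{Q}-\nabla_Q$ as multiplication by a degree one function $f$, combines $L_{\mathbb{Q}}\omega=0$ with the Cartan identity (\ref{eq:nabla}) and $d_\nabla\omega=0$ to get $f\omega=d_\nabla i_{\mathbb{Q}}\omega$, and applies $d_\nabla$ to obtain $df\cdot\omega=0$. The divergence is in extracting $f=0$: the paper simply asserts ``by non-degeneracy, $df=0$'', whereas you prove it by an explicit coordinate computation. Your instinct that this is the only non-formal point is well placed: multiplication by $\omega$ is \emph{not} injective on arbitrary $1$-forms even when $\omega$ is non-degenerate (already for $\gM=T^\ast[1]\mathbb{R}$ with $\omega=dz\,dx$ one has $dx\cdot\omega=(dx)^2dz=0$ although $dx\neq 0$), so the paper's one-line appeal to non-degeneracy is eliding precisely the computation you supply, which uses that $f$ has positive internal degree and hence $df$ has $dz$-components whose squares survive. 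Two caveats on your version: the normal form $\omega=dz_\alpha\wedge dx^\alpha\otimes\lambda$ is essentially the first (non-$Q$) half of Theorem \ref{TheoremLCS} --- not circular, since that part is independent of the proposition, but it should be cited or rederived from the uniqueness clause of Theorem \ref{Theorem1} applied to the Spencer data $(0,\ell)$ in a flat frame; and your final step is specific to degree one, while the proposition and the rest of the argument are stated for arbitrary degree $n$, so strictly speaking you prove only the case needed for Theorem \ref{TheoremLCS}.
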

 \begin{proof}
 Derivations $\mathbb{Q}$ and $\nabla_Q$ share the same symbol $Q$ and, therefore, their difference $\mathbb{Q} - \nabla_Q$ is a degree one endomorphism of $\gL$ which can only consist in multiplying sections by a degree one function $f$ on $\gM$. Thus,
 \[
 0 = L_\mathbb{Q} \omega = L_{\nabla_Q} \omega + f \omega = - d_\nabla i_{\mathbb{Q}} \omega + f \omega
 \]
 So that $f \omega = d_\nabla i_{\mathbb{Q} }\omega$. It follows that
 \[
 0 = d_\nabla (f \omega) = df \cdot \omega.
 \]
 Hence, by non-degeneracy, $df = 0$. Since $f$ is a positive degree function, one concludes that $f = 0$.
 \end{proof}
 
\begin{theorem} \label{TheoremLCS}
Every degree one abstract lcs $\N$-manifold $(\gM, \gL, \nabla, \omega)$ is of the kind $(T^\ast [1] M \otimes L, (T^\ast [1] M \otimes L) \times_M L, \nabla, d_\nabla \vartheta)$, up to isomorphisms (and a shift in the degree of $\gL$), where $\vartheta$ is the tautological degree one, $L$-valued $1$-form on $T^\ast[1] M \otimes L$ and $\nabla$ is a flat connection in the line bundle $L \rightarrow M$ (see Example \ref{Examp3}). Moreover, there is a one-to-one correspondence between degree one abstract lcs $\N Q$-manifolds and (non-graded) abstract locally conformal Poisson manifolds (see Appendix \ref{Ap1} for a definition).
\end{theorem}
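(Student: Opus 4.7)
The plan is to mimic the strategy of Theorem \ref{Roy}, replacing ordinary differential forms and the de Rham differential $d$ with $L$-valued differential forms and the differential $d_\nabla$ attached to the flat connection $\nabla$. Write $\gM = A[1]$ for some ordinary vector bundle $A \to M$ and, after the allowed shift bringing $\gL$ to degree zero, $\gL = \gM \times_M L$ for a line bundle $L \to M$ carrying the flat connection $\nabla$. Let $(D, \ell)$ be the Spencer data of $\omega$ viewed as a degree one, $L$-valued $2$-form. Non-degeneracy of $\omega$ forces $\ell : \Gamma(A) \to \Omega^1(M, L)$ to be an isomorphism of $C^\infty(M)$-modules, yielding an identification $A \simeq T^\ast M \otimes L$ and hence a diffeomorphism $\gM \simeq T^\ast [1] M \otimes L$. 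Next, applying the formulas from Example \ref{Examp1bis} to the condition $d_\nabla \omega = 0$ shows that $D = -d_\nabla \circ \ell$ on $\mathfrak{X}_{-1}(\gM)$, i.e.\ the $L$-valued analogue of diagram (\ref{eq:diag_closed}) commutes. Comparison with the Spencer data of $d_\nabla \vartheta$ computed in Example \ref{Examp3} then shows that this diffeomorphism identifies $\omega$ with $d_\nabla \vartheta$, proving the first half of the statement.

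Having identified $\gM$ with $T^\ast[1] M \otimes L$, an homological derivation $\mathbb{Q}$ of $\gL$ with symbol $Q$ corresponds, exactly as in the $\N Q$-vector bundle examples of Section \ref{SecVVF}, to a Lie algebroid structure $(T^\ast M \otimes L, \rho, [\![-,-]\!])$ together with a flat representation $\nabla^L$ of this algebroid on $L$. Compatibility $L_{\mathbb{Q}} \omega = 0$ is encoded, via Proposition \ref{prop:comp}, in the three equations $A(X,Y) = B(X,Y) = C(X,Y) = 0$. As in the proof of Theorem \ref{Roy}, the $d_\nabla$-closedness of $\omega$ renders (\ref{C1}) a consequence of (\ref{C2}) and (\ref{C3}); moreover, by $C^\infty(M)$-linearity of those expressions in the arguments, it suffices to test them on generators of $\mathfrak{X}_{-1}(\gM) \simeq \Omega^1(M, L)$ of the form $X = d_\nabla \lambda$ and $Y = d_\nabla \mu$ with $\lambda, \mu \in \Gamma(L)$.

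Using the $L$-valued analogues of the identities $\ell(df) = df$ and $D(df) = 0$ from the proof of Theorem \ref{Roy} (now obtained by evaluating the Spencer data of Example \ref{Examp3} on the distinguished generators $d_\nabla \lambda$), a direct computation will rewrite $B(d_\nabla \lambda, d_\nabla \mu)$ and $C(d_\nabla \lambda, d_\nabla \mu)$ as two algebraic/differential expressions in the data $([\![-,-]\!], \rho, \nabla^L)$, playing the role that $[\![df,dg]\!] + d\rho(df)(g)$ and $\rho(df)(g) + \rho(dg)(f)$ played in Theorem \ref{Roy}. The plan is then to recognise these expressions as the infinitesimal data defining an abstract locally conformal Poisson structure on $(M, L, \nabla)$ in the sense of Appendix \ref{Ap1}, thereby establishing the claimed one-to-one correspondence. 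The main obstacle I anticipate is not the Cartan-calculus computations themselves, which are routine given Proposition \ref{prop:comp} and the template of the symplectic case, but rather the bookkeeping needed to match signs and to identify the resulting system with the precise intrinsic definition of an abstract locally conformal Poisson manifold recalled in the appendix.
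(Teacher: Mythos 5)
Your proposal follows essentially the same route as the paper's own proof: identify $\gM \simeq T^\ast[1]M \otimes L$ via the isomorphism $\ell : \Gamma(A) \to \Omega^1(M,L)$ forced by non-degeneracy, deduce $D = -d_\nabla \circ \ell$ from $d_\nabla$-closedness, match against the Spencer data of $d_\nabla\vartheta$ from Example \ref{Examp3}, and then evaluate $B$ and $C$ from Proposition \ref{prop:comp} on generators $d_\nabla\lambda, d_\nabla\mu$ to recover the characterization (\ref{Eq17}) of abstract locally conformal Poisson structures. The only cosmetic difference is that you invoke Example \ref{Examp1bis} to justify the commuting diagram where the paper simply asserts it, which is a perfectly sound (arguably cleaner) way to make that step explicit.
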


\begin{proof}
The proof is a suitable adaptation of both the proofs of Theorem \ref{Gra} and Theorem \ref{Roy}. Let $(\gM, \gL, \nabla, \omega)$ be a degree one abstract lcs $\N$-manifold, and let $(D, \ell)$ be the Spencer data of $\omega$. In particular, $\gM = A[1]$ for some vector bundle $A \rightarrow M$, and $\gL = A[1] \times_M L $ for some line bundle $L \to M$ (up to a shift). Moreover $\nabla$ is induced in $\gL$ by a flat connection in $L$ which, abusing the notation, we denote by $\nabla$ again.

By non-degeneracy $\ell : \mathfrak{X}_{-1} (\gM) \rightarrow \Omega^1 (\gM , L)$ is an isomorphism $\Gamma (A) \simeq \Omega^1 (M , L)$, i.e.~$\gM = A[1] \simeq T^\ast [1] M \otimes L$. Moreover, since $\omega$ is $d_\nabla$-closed, diagram
\[
\xymatrix{\Gamma (A) \ar[r]^-D \ar[d]_-{\ell} & \Omega^2 (M , L) \\
                 \Omega^1 (M, L) \ar[ur]_-{-d_\nabla} &
} 
\]
commutes. This shows that diffeomorphism $\gM \simeq T^\ast[1] M \otimes L$ identifies $\omega$ with the canonical $L$-valued $2$-form on $T^\ast [1] M \otimes L$ (see Example \ref{Examp3}), thus proving the first part of the statement. In the following we identify $\gM$ and $T^\ast[1] M \otimes L$. For the second part of the statement, let $\mathbb{Q}$ be an homological derivation of $\gL = \gM \times_M L$. Derivation $\mathbb Q$ is equivalent to a Lie algebroid $(T^\ast M \otimes L, \rho, [\![-,-]\!])$ equipped with a representation $(L, \nabla^L)$ (beware not to confuse the Lie algebroid connection $\nabla^L$ and the standard connection $\nabla$). As above $(\gM, \gL, \nabla, \omega)$ is an abstract lcs $\N Q$-manifold with homological derivation $\mathbb{Q}$ iff  
(\ref{C2}), and (\ref{C3}) are satisfied for all
$X,Y \in \mathfrak{X}_{-1} (\gM) \simeq \Omega^1 (M, L)$. Similarly as in the proof of Theorem \ref{Roy}, one can even restrict to $X,Y$ in the form $d_\nabla \lambda,d_\nabla \mu$, with $\lambda,\mu \in \Gamma ( L)$. In this case one gets
\begin{align*}
B(X,Y) & = \ell ([[Q, d_\nabla \lambda], d_\nabla \mu]) + L_{[\mathbb Q, d_\nabla \lambda]} \ell (d_\nabla \mu) \\
                                        & = [\![d_\nabla \lambda,d_\nabla \mu]\!] + \nabla^L_{\rho (d_\nabla \lambda)}d_\nabla \mu\\
                                        & = [\![d_\nabla \lambda,d_\nabla \mu]\!] + d_\nabla \nabla^L_{\rho (d_\nabla \lambda)} \mu,
\end{align*}
and
\begin{align*}
C(X,Y) = i_{[Q,d_\nabla \lambda]} \ell (d_\nabla \mu) +  i_{[Q,d_\nabla \mu]} d_\nabla \lambda 
                                        = \nabla^L_{\rho (d_\nabla \lambda)} \mu + \nabla^L_{\rho (d_\nabla \mu)} \lambda,
\end{align*}
  where we used that $\ell (d_\nabla \lambda) = i_{d_\nabla \lambda} \omega = d_\nabla \lambda$, and $D(d_\nabla \lambda) = L_{d_\nabla \lambda} \omega = 0$ (see Example \ref{Examp1}). Concluding, $(\gM, \gL, \nabla, \omega)$ is an abstract lcs $\N Q$-manifold with homological derivation $\mathbb{Q}$ iff $\nabla^L_{\rho (d_\nabla \lambda)} \mu + \nabla^L_{\rho (d_\nabla \mu)} \lambda = 0$ and $[\![d_\nabla \lambda,d_\nabla \mu]\!] = - d_\nabla \nabla^L_{\rho (d_\nabla \lambda)} \mu = d_\nabla \nabla^L_{\rho (d_\nabla \mu)} \lambda$, i.e.~iff $(T^\ast M \otimes L, \rho, [\![-,-]\!])$ is the Lie algebroid associated to a locally conformal Poisson structure $(L, \nabla, P)$ on $M$, and $(L,\nabla^L)$ is its canonical representation.
\end{proof}

\section{Higher Degree Forms on Degree One $\N Q$-manifolds}\label{SecHigher}
\subsection{Vector valued forms on degree one $\N Q$-manifolds and Spencer operators}
In this section, we discuss general degree one compatible vector valued forms on degree one $\N Q$-manifolds. It turns out that they are equivalent to the recently introduced \emph{Spencer operators on Lie algebroids} \cite{css12}. Let $(A, [\![-,-]\!], \rho)$ be a Lie algebroid over a manifold $M$, $(E, \nabla^E)$ a representation of $A$, and let $k$ be a non-negative integer. An \emph{$E$-valued $k$-Spencer operator} \cite{css12} is a pair consisting of
\begin{itemize}
\item a (first order) differential operator $D: \Gamma(A) \rightarrow \Omega^k (M,E)$, and
\item a $C^\infty (M)$-linear map $\ell : \Gamma(A) \rightarrow \Omega^{k-1} (M,E)$,
\end{itemize}
such that
\[
D(fX)=f D(X) - df \wedge \ell (X)
\]
and, moreover,
\begin{align}
L_{\nabla^E_X} D(Y) - L_{\nabla^E_Y} D(X) & = D ([\![X,Y]\!]),  \label{IM3} \\
L_{\nabla^E_X} \ell (Y) + i_{\rho(Y)} D(X) & = \ell([\![X,Y]\!]), \nonumber\\
i_{\rho(X)} \ell (Y) + i_{\rho (Y)} \ell (X) & = 0, \nonumber
\end{align}
for all $X,Y \in \Gamma (A)$. There is a difference in signs between the above definition and the original one in \cite{css12}. The original definition is recovered by replacing $D \rightarrow -D$. We chose the sign convention which makes formulas simpler in the present graded context.

Spencer operators are the infinitesimal counterparts of multiplicative vector valued forms on Lie groupoids.  When the vector bundle is a trivial line bundle, they reduce to the IM forms of Bursztyn and Cabrera (see \cite{BCOb}, see also \cite{BCOa} for the $2$-form case).   Hence the result of this section is the expected generalization of the following (well) known facts:
\begin{itemize}
\item \emph{Jacobi manifolds can be understood either as infinitesimal counterparts of contact Lie groupoids \cite{cs13} or as degree one contact $\N Q$-manifolds \cite{m13}},
\item \emph{Poisson manifolds can be understood either as infinitesimal counterparts of symplectic Lie groupoids \cite{w87} or as degree one symplectic $\N Q$-manifolds \cite{r02}}.
\end{itemize}

\begin{theorem}\label{TheorSO} 
There is a one-to-one correspondence between
\begin{itemize}
\item degree one $\N$-manifolds equipped with a degree zero $\N Q$-vector bundle $\gE$ and a degree one compatible $\gE$-valued differential $k$-form, and 
\item Lie algebroids equipped with a representation $(E, \nabla^E)$ and an $E$-valued $k$-Spencer operator.
\end{itemize}
\end{theorem}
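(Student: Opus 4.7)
The plan is to combine Theorem \ref{Theorem1} with Proposition \ref{prop:comp}, specialized to the degree one setting. Any degree one $\N$-manifold is of the form $\gM = A[1]$, so that the homological vector field $Q$ encodes a Lie algebroid structure $(A,\rho,[\![-,-]\!])$ on $A$, and a degree zero $\N Q$-vector bundle $(\gE,\mathbb Q)$ amounts to $\gE = \gM \times_M E$ together with an $A$-representation $(E,\nabla^E)$, satisfying $[\mathbb Q,X] = \nabla^E_X$ for $X \in \Gamma(A) \simeq \mathfrak X_{-1}(\gM)$.

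For a degree one, $E$-valued $k$-form $\omega$, Theorem \ref{Theorem1} produces Spencer data $(D,\ell)$. Since $\mathfrak X_-(\gM) = \mathfrak X_{-1}(\gM) \simeq \Gamma(A)$, the values $D(X)$ and $\ell(X)$ have internal degree $1 + |X| = 0$; hence $D : \Gamma(A) \to \Omega^k(M,E)$ and $\ell : \Gamma(A) \to \Omega^{k-1}(M,E)$. For $X,Y \in \Gamma(A)$, terms such as $L_X D(Y)$, $L_X \ell(Y)$, and $i_X \ell(Y)$ appearing in (\ref{Eq2})--(\ref{Eq4}) lie in internal degree $-1$, where no $E$-valued forms exist, so those three conditions become automatic. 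Only (\ref{Eq1}) contributes; with $(-)^{|X|} = -1$ it reads $D(fX) = fD(X) - df \wedge \ell(X)$, and $\ell$ is $C^\infty(M)$-linear, matching the first-order operator axiom of a $k$-Spencer operator.

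Next, apply Proposition \ref{prop:comp}. Since $|\omega| = 1$, the trailing $L_{\mathbb Q}$-summands in (\ref{C1})--(\ref{C3}) vanish by degree. Using the identifications $[[Q,X],Y] \mapsto [\![X,Y]\!]$ and $[\mathbb Q,X] \mapsto \nabla^E_X$ (with symbol $\rho(X)$ acting on degree zero data on $M$), together with the sign values $(-)^{|X|} = (-)^{|Y|} = -1$ and $(-)^{|X||Y|} = -1$, the compatibility $L_{\mathbb Q}\omega = 0$ becomes the system
\begin{align*}
D([\![X,Y]\!]) & = L_{\nabla^E_X} D(Y) - L_{\nabla^E_Y} D(X), \\
\ell([\![X,Y]\!]) & = L_{\nabla^E_X} \ell(Y) + i_{\rho(Y)} D(X), \\
0 & = i_{\rho(X)} \ell(Y) + i_{\rho(Y)} \ell(X),
\end{align*}
which is precisely the definition of an $E$-valued $k$-Spencer operator.

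For the converse, starting from a Lie algebroid $A$ with representation $(E,\nabla^E)$ and a $k$-Spencer operator $(D,\ell)$, Theorem \ref{Theorem1} reconstructs a unique degree one $E$-valued $k$-form $\omega$ on $\gM = A[1]$ whose Spencer data are $(D,\ell)$, and the same computation read in reverse delivers $L_{\mathbb Q}\omega = 0$. The main obstacle is purely bookkeeping: keeping the signs in (\ref{C1})--(\ref{C3}) straight and verifying that the terms meant to vanish do vanish by degree reasons (in particular, that there are no nonzero $E$-valued forms on $\gM$ of internal degree $-1$). Beyond that, the substantive content is entirely packaged by Theorem \ref{Theorem1} and Proposition \ref{prop:comp}.
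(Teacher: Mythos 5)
Your proposal is correct and follows essentially the same route as the paper: identify the degree one $\N Q$-vector bundle with a Lie algebroid plus representation, pass to Spencer data via Theorem \ref{Theorem1} (noting that conditions (\ref{Eq2})--(\ref{Eq4}) are vacuous in degree one and that the trailing $L_{\mathbb Q}$-terms in (\ref{C1})--(\ref{C3}) vanish by degree), and read off from Proposition \ref{prop:comp} that compatibility is exactly the $k$-Spencer operator axioms. Your explicit degree-counting justification for why the extra conditions trivialize is a small expository addition, not a different argument.
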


\begin{proof}
Let $\gM$ be a degree one $\N$-manifold, and let $(\gE, \mathbb{Q})$ be a degree zero $\N Q$-vector bundle over it. In particular $\gE = \gM \times_M E$ for a non-graded vector bundle $E \rightarrow M$. Let $(T^\ast M, \rho, [\![-,-]\!])$ and $(E, \nabla^E)$ be the Lie algebroid and the Lie algebroid representation corresponding to $\mathbb{Q}$. Finally, let $\omega$ be a degree one $E$-valued $k$-form on $\gM$. Then, $\omega$ is compatible with $\mathbb{Q}$  iff
(\ref{C1}), (\ref{C2}), and (\ref{C3}) are satisfied,
for all $X,Y \in \mathfrak{X}_{-1} (\gM) \simeq \Gamma (A)$. Denote by $(D, \ell)$ the Spencer data corresponding to $\omega$. Then\begin{align*}
A(X,Y) &= D([\![X,Y]\!]) - L_{\nabla^E_X} D(Y) + L_{\nabla^E_Y} D(X),\\
B(X,Y) &= \ell([\![X,Y]\!]) + i_{\rho (X)} D(Y) + L_{\nabla_Y^E} \ell(X),\\
C(X,Y) &= i_{\rho (X)} \ell (Y) + i_{\rho (Y)} \ell (X) .
\end{align*}
 
Concluding, $\omega$ is compatible with $\mathbb{Q}$ iff $(D,\ell)$ is an $E$-valued $k$-Spencer operator on the Lie algebroid $A$.
\end{proof}

\subsection{Degree one multisymplectic $\N Q$-manifolds}
We conclude this section specializing to degree one multisymplectic $\N Q$-manifolds.  Let $k$ be a positive integer. Recall that a $k$-plectic manifold (see, for instance, \cite{r12}, see also \cite{cid99} for more details on multisymplectic geometry) is a manifold $N$ equipped with a $k$-plectic structure, i.e.~a closed $(k+1)$-form $\omega$ which is non-degenerate in the sense that the vector bundle morphism $TN \rightarrow \wedge^k T^\ast N$, $X \mapsto i_X \omega$ is an embedding. As expected, degree one multisymplectic $\N Q$-manifolds are equivalent to Lie algebroids equipped with an \emph{IM multisymplectic structure}, also called \emph{higher Poisson structure} in \cite{bci13}. The latter are infinitesimal counterparts of multisymplectic groupoids. Specifically, an \emph{IM $k$-plectic structure} on a Lie algebroid $(A, [\![-,-]\!], \rho)$ (see \cite{bci13}) is a $C^\infty (M)$-linear map $\ell : A \rightarrow \Omega^k (M)$ such that 
\begin{align}
i_{\rho (X)} \ell (Y) + i_{\rho(Y)} \ell (X) & = 0, \label{IM1} \\ 
L_{\rho (X)} \ell (Y) - i_{\rho (Y)} d \ell(X) & = \ell ([\![ X,Y]\!]), \label{IM2}
\end{align}
for all $X,Y \in \Gamma (A)$, and, moreover,
\begin{itemize}
\item $\ker \ell := \{ a \in A : \ell (a)= 0 \} = 0$,
\item $(\operatorname{im} \ell )^\circ := \{\zeta \in  TM : i_\zeta \circ \ell = 0\} = 0$.
\end{itemize}

\begin{definition}
A degree $n$ \emph{$k$-plectic $\N$-manifold} is a degree $n$ $\N$-manifold $\gM$ equipped with a degree $n$ \emph{$k$-plectic structure}, i.e.~a closed $(k+1)$-form which is \emph{non-degenerate} in the sense that the degree $n$ vector bundle morphism $T \gM  \rightarrow S^k T^\ast [-1] \gM$, $X \mapsto i_X \omega$ is an embedding. A degree $n$ \emph{$k$-plectic $\N Q$-manifold} is an $\N Q$-manifold equipped with a compatible $k$-plectic structure.
\end{definition} 

\begin{example}
Let $M$ be an ordinary (non-graded) manifold. The degree $n$ $\N$-manifold $\gM = (\wedge^k T^\ast) [n] M$ comes equipped with the obvious tautological, degree $n$ $k$-form $\vartheta$. Consider the degree $n$ $(k+1)$-form $\omega = d\vartheta$. It is a degree $n$ $k$-plectic structure. Negatively graded vector fields on $\gM$ identify with $k$-forms on $M$ and it is easy to see, along similar lines as in Example \ref{Examp1}, that the Spencer data $(D,\ell)$ of $\omega$ identify with $(-)^n$ times the exterior differential $d : \Omega^k (M) \rightarrow \Omega^{k+1} (M)$ and the identity $\mr{id} : \Omega^k (M) \rightarrow \Omega^k (M)$ respectively.
\end{example}

\begin{theorem}\label{TheorMS}
Degree one $k$-plectic $\N Q$-manifolds are in one-two-one correspondence with Lie algebroids equipped with an IM $k$-plectic structure.
\end{theorem}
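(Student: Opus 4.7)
The strategy is to deduce Theorem~\ref{TheorMS} from Theorem~\ref{TheorSO} by specializing to trivial coefficients and then imposing the closedness and non-degeneracy of $\omega$. First, I apply Theorem~\ref{TheorSO} with $\gE = \gM \times \mathbb{R}$ the trivial line $\N Q$-bundle (carrying the trivial flat $A$-representation): a compatible degree one $\mathbb{R}$-valued $(k{+}1)$-form on $\gM = A[1]$ is equivalent to a Lie algebroid $(A, [\![-,-]\!], \rho)$ together with an $\mathbb{R}$-valued $(k{+}1)$-Spencer operator $(D, \ell)$, where $D : \Gamma(A) \to \Omega^{k+1}(M)$ and $\ell : \Gamma(A) \to \Omega^k(M)$.

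Second, I use the closedness $d\omega = 0$ to eliminate $D$. By Example~\ref{Examp1bis}, the Spencer data of $d\omega$ are $D'(X) = d D(X)$ and $\ell'(X) = D(X) - (-)^{|X|} d \ell(X)$; since $|X| = -1$ for $X \in \mathfrak X_{-1}(\gM) = \Gamma(A)$, vanishing of $\ell'$ forces $D = -d \circ \ell$, after which $D' = 0$ holds automatically. Substituting $D = -d \circ \ell$ into the three Spencer operator axioms from Theorem~\ref{TheorSO}, a short Cartan-calculus check (applying $d$ to the mixed axiom and using $[d, i_{\rho(Y)}] = L_{\rho(Y)}$) shows that the axiom involving $D$ alone follows from the mixed axiom, while the mixed and symmetric axioms reduce respectively to (\ref{IM2}) and (\ref{IM1}). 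Thus closed compatible $(k{+}1)$-forms on $\gM$ correspond bijectively to pairs $(A, \ell)$ satisfying these two identities.

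The remaining task is to match the non-degeneracy of $\omega$ with the two conditions $\ker \ell = 0$ and $(\operatorname{im}\ell)^\circ = 0$. Since $\gM = A[1]$ is concentrated in degrees $0$ and $-1$, its tangent bundle splits fiberwise into a degree $-1$ part (with fiber $A_p$) and a degree $0$ part projecting onto $T_p M$; for $X \in \Gamma(A)$ one has $i_X \omega = \ell(X)$, so injectivity of $X \mapsto i_X\omega$ in degree $-1$ is exactly $\ker \ell = 0$, while testing projectable degree zero lifts $\tilde Z$ of $Z \in \mathfrak X(M)$ against $\omega$ and computing with Cartan identities together with $D = -d \circ \ell$ shows that injectivity in degree $0$ reduces to the implication $i_Z \ell(a) = 0\ \forall a \in A_p \Rightarrow Z = 0$, i.e.~$(\operatorname{im}\ell)^\circ = 0$. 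The converse direction reverses these steps: starting from an IM $k$-plectic $\ell$, set $D := -d \circ \ell$, verify the Spencer axioms, invoke Theorem~\ref{TheorSO} to obtain $\omega$, and read off $k$-plectic non-degeneracy from the two IM conditions. The main obstacle is this last step, where splitting the non-degeneracy of $\omega$ across the two homogeneous pieces of $T\gM$ and matching them with the two distinct IM conditions requires some care, though the computation is essentially local and uses only the Cartan identities already collected in Section~\ref{SecVVF}.
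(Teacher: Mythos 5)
Your proposal is correct and follows essentially the same route as the paper: closedness forces $D = -d\circ\ell$, compatibility (via the Spencer-operator correspondence of Theorem \ref{TheorSO}) reduces to (\ref{IM1})--(\ref{IM2}), and non-degeneracy is split along $T\gM|_M \simeq TM \oplus A[1]$ into the two conditions $\ker\ell = 0$ and $(\operatorname{im}\ell)^\circ = 0$. The one step you leave implicit --- that injectivity of the contraction morphism restricted to the zero section $M \subset \gM$ already implies non-degeneracy of $\omega$ over all of $\gM$ --- is exactly where the paper supplies a block-triangular matrix argument in local coordinates, and is worth writing out.
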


\begin{proof}
Let $\gM$ be a degree one $\N$-manifold, $\omega$ a degree one $(k+1)$-form on it and let $(D, \ell)$ be the corresponding Spencer data. In particular, $\gM = A[1]$ for some vector bundle $A \rightarrow M$. Moreover, $\omega$ is closed iff $i_X d\omega = 0$ for all negatively graded vector fields $X$ on $\gM$. Indeed, from $i_X d \omega = 0$ it also follows $L_X d \omega = 0$. In other words, $d \omega = 0$ iff diagram
\[
\xymatrix{\Gamma (A) \ar[r]^-D \ar[d]_-{\ell} & \Omega^{k+1} (M) \\
                 \Omega^k (M) \ar[ur]_-{-d} &
} 
\]
commutes. Conversely, a $C^\infty (M)$-linear map $\ell : \Gamma (A) \rightarrow \Omega^k (M)$ uniquely determines a closed degree one $(k+1)$-form on $\gM$ whose Spencer data are $(-d \circ \ell , \ell)$. Concluding, degree one $\N$-manifolds equipped with a closed $(k+1)$-form are equivalent to vector bundles $A \rightarrow M$ equipped with a linear map $\ell : \Gamma(A) \rightarrow \Omega^k (M)$.

Now, let $Q$ be an homological vector field on $\gM$ and let $(A, \rho, [\![-,-]\!])$ be the corresponding Lie algebroid. The $(k+1)$-form $\omega$ is compatible with $Q$ iff $(-d \circ \ell, \ell)$ is a $(k+1)$-Spencer operator, i.e.~$\ell$ fulfills (\ref{IM1}) and (\ref{IM2}) (Equation (\ref{IM3}) then follows from $D = -d \circ \ell$).

 Finally, we need to characterize non-degeneracy of the closed form $\omega$ in terms of $\ell$. Recall that $M$ can be understood as a submanifold in $\gM$ via the ``zero section'' of $\gM \rightarrow M$, and the vector bundle morphism $\Gamma : T \gM \rightarrow S^k T^\ast[-1] \gM$, $X \mapsto i_X \omega$, restricts to a vector bundle morphism $\Gamma |_M : T \gM|_M \rightarrow S^k T^\ast[-1] \gM |_M$. Now, there are canonical identification $T\gM|_M = TM \oplus A[1]$, and
\[
S^k T^\ast[-1] \gM |_M = \bigoplus_{i+j=k} \wedge^i T^\ast M \otimes S^j A^\ast [-1].
\]
It follows from $|\omega| = 1$ that $\Gamma |_M$ does actually take values in $ \wedge^{k-1} T^\ast M \otimes A^\ast [-1] \oplus \wedge^k T^\ast M$. More precisely, it identifies with the pair of vector bundle morphisms
\[
A[1] \longrightarrow \wedge^k T^\ast M, \quad X \longmapsto \ell(X).
\]
and
\[
TM \longrightarrow \wedge^{k-1} T^\ast M \otimes A^\ast [-1], \quad Z \longmapsto i_Z \circ \ell.
\]
Consequently, $\ker \ell$ and $(\operatorname{im} \ell)^\circ$ are trivial iff $\Gamma |_M$ is an embedding. It remains to show that $\omega$ is non-degenerate provided only $\Gamma |_M$ is an embedding. This is easily seen, for instance, in local coordinates: let $x^i$ be coordinates in $M$ and $z^a$ be (degree one) fiber coordinates in $A[1] \rightarrow M$. Locally,
\[
\omega = \omega_{a| i_1 \cdots i_k} dz^a dx^{i_1} \cdots dx^{i_k} + \omega^\prime_{a | i_1 \cdots i_{k+1}} z^a dx^{i_1} \cdots dx^{i_{k+1}}.
\]
In the basis $\{ \; \partial / \partial z^a \; | \; \partial / \partial x^i \; \}$ of $\frak{X} (\gM)$ and $\{\; dx^{i_1} \cdots dx^{i_k} \; | \; dz^a dx^{i_1} \cdots dx^{i_{k-1}} \; | \; \cdots \; \}$ of $\Omega^k (\gM)$, vector bundle morphism $\Gamma$ is represented by matrix
\[
\left( \begin{array}[c]{c}
\xymatrix@R=45pt{ \  \\ \ 
}
\end{array}
 \right. \hspace{-30pt}
\begin{array}[c]{c}
 \xymatrix@C=5pt@R=5pt{  &  &  \ar@{.}[dddd] &  &  \ar@{.}[dddd]  &  & \\
                   & \omega_{a | i_1 \cdots i_k} & & 0 &  & \cdots &\\
               \ar@{.}[rrrrrr]   &  &  &  &  & & \\
                  & \ast & & k \omega_{a | we i_1 \cdots i_k} & &  \cdots &\\
                  & & & & & &}
                  \end{array}
\left. \hspace{-25pt} \begin{array}[c]{c}
\xymatrix@R=45pt{ \  \\ \ 
}
\end{array}
 \right),
\]
and $\Gamma |_M$ is represented by the same matrix with the lower-left block set to zero. This concludes the proof. 
\end{proof}

 \appendix
 
\section{Locally Conformal Symplectic Manifolds Revisited}\label{Ap1}
We refer to \cite{v85} for details about standard locally conformal symplectic (lcs) structures. Here, we present a slightly more intrinsic approach to them \cite{v14} (see also \cite[Section 3]{vit15}). Let $M$ be a smooth manifold.
\begin{definition}\label{def1}
An \emph{abstract lcs structure} on $M$ is a triple $(L, \nabla, \omega)$, where $L \rightarrow M$ is a line bundle, $\nabla$ is a flat connection in $L$, and $\omega$ is a non-degenerate $L$-valued $2$-form on $M$ such that $d_\nabla \omega = 0$, where $d_\nabla : \Omega (M,L) \rightarrow \Omega (M,L)$ is the de Rham differential determined by $\nabla$. A manifold equipped with an abstract lcs structure is an \emph{abstract lcs manifold}.
\end{definition}
\begin{example}
Let $(L, \nabla, \omega)$ be an abstract lcs structure on $M$. If $L = M \times \mathbb{R}$ is the trivial line bundle, then $\nabla$ is the same as a closed $1$-form on $M$, specifically, the \emph{connection $1$-form $\phi := - d_\nabla 1 \in \Omega^1 (M)$}. Moreover, $\omega$ is a standard (non-degenerate) $2$-form on $M$ and it is easy to see that $(\phi, \omega)$ is a standard lcs structure, i.e.~$d\omega = \phi \wedge \omega$. In particular, if $\phi = 0$, then $\omega$ is a symplectic structure.
\end{example}
The word ``abstract'' in Definition \ref{def1} refers to the fact that $\omega$ takes values in an ``abstract'' line-bundle $L$, as opposed to the concrete, trivial line bundle $M \times \mathbb{R}$. Similarly, one can define \emph{``abstract'' locally conformal Poisson manifolds} (see below) and, more generally, \emph{``abstract'' Jacobi manifolds}. An \emph{abstract Jacobi structure} (called a Jacobi bundle in \cite{Marle1991}) on a manifold $M$ is a line bundle $L$ equipped with a Lie bracket $\{-,-\}$ on $\Gamma (L)$ which is a first order differential operator in each entry (see, e.g., \cite{cs13} for details). Abstract Jacobi manifolds where introduced by Kirillov \cite{Kir} under the name \emph{local Lie algebras with one dimensional fibers}. An abstract lcs structure $(L, \nabla, \omega)$ on $M$ determines an abstract Jacobi structure $(L, \{-,-\})$ as follows. First of all, by non degeneracy, $\omega$ establishes an isomorphism $TM \rightarrow T^\ast M \otimes L$, $X \mapsto i_X \omega$. Denote by $\sharp: T^\ast M \otimes L \rightarrow TM$ the inverse isomorphism and, for $\lambda \in \Gamma (L)$, put $X_\lambda := \sharp (d_\nabla \lambda) \in \mathfrak{X} (M)$. Finally, put
\[
\{ \lambda, \mu \} := \omega (X_\lambda, X_\mu) = \nabla_{X_\lambda} \mu .
\] 
$\lambda, \mu \in \Gamma (L)$. Clearly, $\{-,-\}$ is a first order differential operator in each entry. Moreover, the Jacobi identity is equivalent to $d_\nabla \omega = 0$. Thus, $(L, \{-,-\})$ is an abstract Jacobi structure on $M$. Notice that there exists a unique linear morphism $P : \wedge^2 (T^\ast M \otimes L) \rightarrow L$ such that
\[
\{ \lambda , \mu \} = P (d_\nabla \lambda, d_\nabla \mu ), \quad \text{for all } \lambda, \mu \in \Gamma (L) .
\]
\begin{example}
Let $L = M \times \mathbb{R}$ so that $(L, \nabla, \omega)$ is the same as a standard lcs structure $(\phi, \omega)$. Then, for $f,g \in C^\infty (M) = \Gamma (L)$, $X_f$ is implicitly defined by
\[
i_{X_f} \omega = df - f \phi ,
\]
and 
\[
\{ f, g \} := \omega (X_f, X_g) = X_f (g) - g \phi (X_f).
\]
In particular, if $\phi = 0$, then $P$ is the Poisson bivector determined by the symplectic structure $\omega$.
\end{example}

More generally, Let $M$ be a smooth manifold, $(L, \nabla)$ a line bundle over $M$ equipped with a flat connection, and let $P : \wedge^2 (T^\ast M \otimes L) \rightarrow L$ be a linear morphism. One can then define a bracket $\{-, -\}_P$ in $\Gamma (L)$ by putting
\[
\{ \lambda , \mu \}_P = P (d_\nabla \lambda, d_\nabla \mu ),
\] 
 $\lambda, \mu \in \Gamma (L)$.
 \begin{definition}
 An \emph{abstract locally conformal Poisson structure} on $M$ is a triple $(L, \nabla, P)$, where $L \rightarrow M$ is a line bundle, $\nabla$ is a flat connection in $L$, and $P$ is a linear morphism $P : \wedge^2 (T^\ast M \otimes L) \rightarrow L$ such that $\{-,-\}_P$ is a Lie bracket. A manifold equipped with an abstract locally conformal Poisson structure is an \emph{abstract locally conformal Poisson manifold}.
 \end{definition}
 
 Thus, abstract lcs manifolds are abstract locally conformal Poisson manifolds (much as standard symplectic manifolds are standard Poisson manifolds), but the latter are more general. 
 
 \begin{example}
 Let $(L, \nabla, P)$ be an abstract locally conformal Poisson structure on $M$. If $L = M \times \mathbb{R}$ is the trivial line bundle, and $\phi := - d_\nabla 1 \in \Omega^1 (M)$ is the connection $1$-form, then $P$ is a standard bivector on $M$ and a lengthy but straightforward computation shows that $(\phi, P)$ is a \emph{locally conformal Poisson structure} in the sense of \cite{v07}, i.e.~$[P,P]_{ns} = i_{\phi} P \wedge P$ (where $[-,-]_{ns}$ is the Nijenhuis-Schouten bracket of multivectors). In particular, if $\phi = 0$, then $P$ is a Poisson structure.
 \end{example}
 
 Finally, notice also that abstract locally conformal Poisson manifolds are abstract Jacobi manifolds (of a special kind).

 \section{Lie Algebroids and Their Representations}\label{ApLA}

Recall that a \emph{Lie algebroid} over a manifold $M$ is a vector bundle
$A\rightarrow M$ equipped with 1) a $C^{\infty}(M)$-linear map
$\rho :\Gamma(A)\rightarrow\mathfrak{X}(M)$ called the \emph{anchor},
and 2) a Lie bracket $[\![-,-]\!]$ on $\Gamma(A)$ such that
\[
[\![ X,fY]\!]=\rho(X)(f)Y+f[\![X,Y]\!],\quad
X,Y\in\Gamma(A),\quad f\in C^{\infty}(M).
\]
\begin{example}
The tangent bundle $TM$ is a Lie algebroid with Lie bracket given by the commutator of vector fields and anchor given by the identity.
\end{example}
Let $A\rightarrow M$ be a Lie algebroid. A
\emph{representation of} $A$ is a vector bundle $E \rightarrow M$ equipped
with a \emph{flat }$A$\emph{-connection} $\nabla^E$, i.e.~a $C^{\infty}%
(M)$-linear map $\nabla^E:\Gamma(A)\rightarrow \Gamma (D E)$, denoted
$X \mapsto\nabla^E_{X}$, such that the symbol of the derivation $\nabla^E_X$ is $\rho(X)$, and $[\nabla^E_{X},\nabla^E_{Y}]=\nabla^E
_{[\![X,Y]\!]}$, for all $X,Y\in\Gamma(A)$. Let $(E,\nabla^E)$ be a
representation of $A$. The graded vector space $\Gamma (\wedge^\bullet A^\ast \otimes E)$ of alternating, $C^{\infty
}(M)$-multilinear, $\Gamma(E)$-valued forms on $\Gamma(A)$ is naturally
equipped with an homological operator $d_{E}$ given by the following
\emph{Chevalley-Eilenberg formula}:
\begin{align*}
&  (d_{E}\varphi)(X_{1},\ldots,X_{k+1})\\
&  :=\sum_{i}(-)^{i+1}\nabla^E_{X_{i}}(\varphi(\ldots,\widehat{X_{i}%
},\ldots))+\sum_{i<j}(-)^{i+j}\varphi([\![X_{i},X_{j}]\!],\ldots
,\widehat{X_{i}},\ldots,\widehat{X_{j}},\ldots),
\end{align*}
where $\varphi\in\Gamma (\wedge^k A^\ast \otimes E)$ is an alternating
form with $k$-entries, $X_{1},\ldots,X_{k+1}\in\Gamma(A)$, and a hat
$\widehat{\left(  -\right)  }$ denotes omission.

\begin{example}
Let $\nabla$ be a standard flat connection in a vector bundle $E$. Then $(E , \nabla)$ is a representation of the Lie algebroid $TM$ and the de Rham operator $d_\nabla$ of $\nabla$ is its associated homological operator.
\end{example}

\begin{example}
Let $A \rightarrow M$ be a Lie algebroid. Clearly $(M \times \mathbb{R}, \rho)$ is a canonical representation of $A$. In particular, $\Gamma (\wedge^\ast A)$ is equipped with an homological operator (in fact a derivation) which we denote by $d_A$.
\end{example}

\begin{example}
Let $(L, \{-,-\})$ be an abstract Jacobi structure on a manifold $M$. There is a unique Lie algebroid $(J^1 L , \rho, [\![-, -]\!])$ such that $[\![ j^1 \lambda, j^1 \mu ]\!] = j^1 \{ \lambda, \mu \}$, and $\rho (j^1 \lambda)$ is the symbol of the first order differential operator (in fact a derivation) $\{ \lambda , -\}$, where $\lambda, \mu \in \Gamma (L)$. Moreover, there is a unique representation $(L, \nabla^L )$ of $J^1 L$ such that $\nabla^L_{j^1 \lambda} \mu = \{ \lambda, \mu \}$. In particular, 
\begin{equation} \label{Eq15}
[\![ j^1 \lambda, j^1 \mu ]\!] = j^1 \left( \nabla^L_{j^1 \lambda} \mu \right).
\end{equation}
Conversely, let $(J^1 L, \rho, [\![ -, -]\! ] )$ be a Lie algebroid equipped with a representation $(L, \nabla^L)$ such that (\ref{Eq15}) holds. For $\lambda, \mu \in \Gamma (L)$ put $\{ \lambda , \mu \} := \nabla^L_{j^1 \lambda} \mu$. Then $(L, \{-,-\})$ is an abstract Jacobi structure on $M$. This shows that abstract Jacobi structures $(L, \{-,-\})$ are equivalent to Lie algebroids $(J^1 L, \rho, [\![ -, -]\! ] )$ equipped with a representation $(L, \nabla^L)$ such that (\ref{Eq15}) holds.
\end{example}

\begin{example}
Let $\{-,-\}$ be a Poisson structure on a manifold $M$. There is a unique Lie algebroid $(T^\ast M , \rho, [\![-, -]\!])$ such that $[\![ df, dg ]\!] = d \{ f, g \}$, and $\rho (df)$ is the Hamiltonian vector field of $f$, where $f,g \in C^\infty (M)$. In particular, 
\begin{equation} \label{Eq16}
[\![ df, dg ]\!] = d \left( \rho (df) (g) \right) \quad \text{and} \quad \rho (df) (g) + \rho (dg) (f) = 0.
\end{equation}
Conversely, let $(T^\ast M, \rho, [\![ -, -]\! ] )$ be a Lie algebroid such that (\ref{Eq16}) hold. For $f,g \in C^\infty (M)$ put $\{ f , g \} :=  \rho (df) (g) $. Then $\{-,-\}$ is a Poisson structure on $M$. This shows that Poisson structures are equivalent to Lie algebroids $(T^\ast M, \rho, [\![ -, -]\! ] )$ such that (\ref{Eq16}) hold.
\end{example}

\begin{example}
Let $(L, \nabla, \omega)$ be an abstract locally conformal Poisson structure on a manifold $M$ (see the previous appendix). There is a unique Lie algebroid $(T^\ast M \otimes L , \rho, [\![-, -]\!])$ such that $[\![ d_\nabla \lambda, d_\nabla \mu ]\!] = d_\nabla \{ \lambda, \mu \}$, and $ \rho (d_\nabla \lambda)$ is the symbol of the first order differential operator $\{ \lambda , -\}$, where $\lambda, \mu \in \Gamma (L)$. Moreover, there is a unique representation $(L, \nabla^L )$ of $T^\ast M \otimes L$ such that $\nabla^L_{d_\nabla \lambda} \mu = \{ \lambda, \mu \}$.
In particular, 
\begin{equation} \label{Eq17}
[\![ d_\nabla \lambda, d_\nabla \mu ]\!] = d_\nabla \left( \nabla^L_{ d_\nabla \lambda} \mu \right) \quad \text{and} \quad \nabla^L_{d_\nabla \lambda} \mu + \nabla^L_{d_\nabla \mu} \lambda = 0.
\end{equation}
Conversely, let $(T^\ast M \otimes L, \rho, [\![ -, -]\! ] )$ be a Lie algebroid equipped with a representation $(L, \nabla^L)$ such that (\ref{Eq17}) hold. For $\lambda, \mu \in \Gamma(L)$ put $\{ \lambda , \mu \} :=  \nabla^L_{ d_\nabla \lambda} \mu $. Then $(L, \nabla, \{-,-\})$ is a locally conformal Poisson structure on $M$. This shows that locally conformal Poisson structures are equivalent to Lie algebroids $(T^\ast M \otimes L, \rho, [\![ -, -]\! ] )$ such that (\ref{Eq17}) hold.
\end{example}

\subsection*{Acknowledgement} I thank the anonymous referee for carefully reading the first manuscript and for her/his suggestions to improve the readability of the paper.

 \bigskip

\end{document}